\renewenvironment{abstract}{\minisec{Abstract}}{\par\vspace{.1in}}
\newenvironment{keywords}{\minisec{Key Words}}{\par\vspace{.1in}}
\newenvironment{AMS}{\minisec{AMS subject classification}}{\par\vspace{.1in}}
\theoremstyle{plain}
\newtheorem{theorem}{Theorem}[section]
\newtheorem{corollary}[theorem]{Corollary}
\newtheorem{lemma}[theorem]{Lemma}
\newtheorem{prop}[theorem]{Proposition}
\theoremstyle{remark}
\newtheorem{remark}[theorem]{Remark}
\theoremstyle{definition}
\newtheorem{assumption}{Assumption}
\numberwithin{equation}{section}
\newcommand{\R}{\mathds{R}}
\newcommand{\na}{\nabla}
\newcommand{\pa}{\partial}
\newcommand{\eps}{\varepsilon}
\newcommand{\Om}{\Omega}
\newcommand{\IOm}{I\times \Om}
\newcommand{\norm}[1]{\lVert#1\rVert}
\newcommand{\abs}[1]{\lvert#1\rvert}
\newcommand{\Ppol}[1]{\mathcal{P}_{#1}}
\newcommand{\lh}{\abs{\ln{h}}}
\newcommand{\lk}{\ln{\frac{T}{k}}}
\newcommand{\Xkzh}{X^{0,1}_{k,h}}
\DeclareMathOperator{\sgn}{sgn}
\renewcommand{\phi}{\varphi}
\begin{document}

\title{Optimal Error Estimates for\\
Fully Discrete Galerkin Approximations of\\
Semilinear Parabolic Equations}

\author{
  Dominik Meidner\footnotemark[2] \and Boris Vexler\footnotemark[2]
}

\markright{Meidner, Vexler: Estimates for Galerkin Approximations of Semilinear Parabolic Equations}

\maketitle

\renewcommand{\thefootnote}{\fnsymbol{footnote}}

\footnotetext[2]{Technical University of Munich, Department of Mathematics, Chair of Optimal
Control, Garching / Germany (meidner@ma.tum.de, vexler@ma.tum.de)}

\renewcommand{\thefootnote}{\arabic{footnote}}

\begin{abstract}
  We consider a semilinear parabolic equation with a large class of nonlinearities without any
  growth conditions. We discretize the problem with a discontinuous Galerkin scheme dG($0$) in time
  (which is a variant of the implicit Euler scheme) and with conforming finite elements in space.
  The main contribution of this paper is the proof of the  uniform boundedness of the discrete
  solution. This allows us to obtain optimal error estimates with respect to various norms.
\end{abstract}

\begin{keywords}
  Parabolic semilinear equations, finite elements, Galerkin time discretization, error estimates
\end{keywords}

\begin{AMS}
  35K58, 65M15, 65M60
\end{AMS}


\section{Introduction}

In this paper, we consider the following semilinear parabolic equation.
\begin{equation}\label{eq:heatEquation}
  \begin{aligned}
    \pa_t u(t,x)-\Delta u(t,x) + d(t,x,u(t,x))&= f(t,x) & (t,x) &\in \IOm,\\
    u(t,x) &= 0    & (t,x) &\in I\times\pa\Omega, \\
    u(0,x) &= u_0(x)   & x &\in \Omega.
  \end{aligned}
\end{equation}
Here, $\Om \subset \R^N$,  $N\in\set{2,3}$ is a convex polygonal/polyhedral domain, $I=(0,T)$ is a
time interval and $f$ is the right-hand side fulfilling a certain regularity requirement to be
specified later.

For the nonlinearity $d(t,x,u)$, we essentially assume that the partial derivative $\partial_u
d(t,x,u)$ is bounded from below for all $(t,x)\in I \times \Om$ and all $u \in \R$,
see~\eqref{ass:c}. But we do not require any growth conditions for $d$, see the next section for
details. The class of possible nonlinearities includes monotone nonlinearities like $d(u) = u^5$,
$d(u) = e^u$ or $d(u) = u^3 \abs{u}$ as well as FitzHugh-Nagumo or Allen–Cahn type nonlinearities
like $d(u) = u^3 - \alpha u$ with some positive $\alpha \in \R$.

For this class of problems (under a suitable assumption on the right-hand side $f$ and the initial
data $u_0$), it is possible to show the existence of a unique bounded solution $u$. The goal of
the paper is to prove the uniform boundedness of the discrete approximation $u_{kh}$ to $u$. To this
end, we discretize the equation with the discontinuous Galerkin  dG($0$) method in time and with 
conforming finite elements in space. The dG($0$) time discretization is known to be a variant of the
implicit Euler scheme, see Section~\ref{sec:disc} for details. For this type of discretization we
prove that $u_{kh}$ is uniformly bounded, i.e.,
\[
\norm{u_{kh}}_{L^\infty(I \times \Omega)} \le C
\]
with a constant $C$ independent of the discretization parameters $k$ and $h$, see
Theorem~\ref{th:BoundUkh}. Based on this result we are able to prove best-approximation-type error
estimates with respect to various norms. We provide such results in particular for the $L^2(I\times
\Omega)$, $L^\infty(I;L^2(\Omega))$, and $L^\infty(I \times \Omega)$ norms, cf. the
Theorems~\ref{th:L2L2},~\ref{th:L8L2}, and~\ref{th:L8L8}, respectively.

Let us review the related results in the literature.  In~\cite{EstepLarsson:1993,Thomee:1986,
ThomeeV_2006}, error estimates for discretization of the semilinear parabolic equation are derived
under the assumption that $d$ and $\partial_u d$ are uniformly bounded.
In~\cite{ChrysafinosHou:2002, LasisSueli:2007} growth conditions on $d$ (resp.\ $\partial_ud$) are
assumed for derivation of semi-discrete error estimates. For further results in a different setting
we refer to~\cite{AkrivisMakridakis:2004}.  The most related result is provided
in~\cite{NeitzelVexler:2010}, where the uniform boundedness of $u_{kh}$ is shown under a slightly
stronger condition $\partial_u d \ge 0$ (cf.~\eqref{ass:c}) in the two-dimensional setting. The
technique from~\cite{NeitzelVexler:2010} does not extend  to the three-dimensional
situation, due to the inverse inequality used there. Our method here strongly relies on recent
discrete maximal parabolic regularity estimates~\cite{LeykekhmanVexler:2017}, cf.
also~\cite{KovacsLiLubich:2016} for related results, and extends best approximation estimates
from~\cite{LeykekhmanD_VexlerB_2015d} to the semilinear equation.

Our error estimates being of independent interest are important for treatment of optimal control
problems. Some recent papers in this context (see, e.g.,~\cite{CasasMateosRoesch:2017,
CasasKruseKunisch:2017}) are restricted to two-dimensional domains only due to the lack of
corresponding results in the three-dimensional setting. Thus, our estimates allow to extend the
results of these papers to convex polyhedral domains $\Omega \subset \R^3$.

The outline of the paper is as follows: In Section~\ref{sec:ContProb}, we state the precise
functional analytic setting of the problem under consideration and formulate assumptions on the
nonlinearity $d$ and the remaining problem data. Under these assumptions, we prove Hölder continuity
of the solution $u$ to~\eqref{eq:heatEquation}. The discrete analog of~\eqref{eq:heatEquation} is
formulated in Section~\ref{sec:disc}. To this end, we introduce a time discretization by the discontinuous Galerkin dG($0$) scheme, whereas the discretization in space is done by means of classical Lagrange finite
elements. In this setting, we prove the unique solvability of the discrete nonlinear problem. In the
following Section~\ref{sec:aux}, we consider a linear auxiliary equation and its discrete analog.
For the solution to this linear discrete problem, we provide maximal parabolic estimates in various
norms, which will be the basis for analysis in the remaining two sections. In
Section~\ref{sec:bound}, we derive the main result of this paper, namely the boundedness of the
solution $u_{kh}$ to the discrete analog of~\eqref{eq:heatEquation}. Based on this, we provide in
the final Section~\ref{sec:estimates} optimal error estimates for the error between $u$ and $u_{kh}$ with respect to the
$L^2(\IOm)$, $L^\infty(I;L^2(\Om))$, and $L^\infty(\IOm)$ norms.


\section{Continuous Problem}\label{sec:ContProb}

To state the precise setting for the problem under consideration, we introduce the following
notation: for $r\in[1,\infty]$ and $l\in\set{-1,0}$, we denote the domain in $W^{l,r}(\Om)$ of the
negative Laplacian with homogeneous Dirichlet boundary conditions by
\[
  \operatorname{Dom}_{l,r}(-\Delta)=\Set{u\in W^{l,r}(\Om) | -\Delta u\in W^{l,r}(\Om)}.
\]
Further, for $p\in[1,\infty]$ , we define the space for the initial data by real interpolation as
\begin{equation}\label{eq:Xpr}
  U_{p,r}(\Om)=(L^r(\Om),\operatorname{Dom}_{0,r}(-\Delta))_{1-\frac1p,p}
\end{equation}

The following set of assumptions holds throughout the article.
\begin{assumption}~\label{ass:fduz}
  \begin{itemize}
    \item Let $f\in L^p(I;L^r(\Omega))$ for some $p\in(1,\infty)$ and $r\in\bigl(\frac
      N2,\infty\bigr)$ satisfying $\frac1p+\frac N{2r}<1$.
    \item Let $u_0 \in U_{p_0,r_0}(\Om)$ for some $p_0\in(1,\infty)$ and $r_0\in\bigl(\frac
      N2,\infty\bigr)$ satisfying $\frac1{p_0}+\frac N{2r_0}<1$.
  \end{itemize}
  Further, for the nonlinearity $d = d(t, x, u)\colon I \times\Omega\times\R\to\R$, we assume the
  following properties:
  \begin{subequations}
    \begin{itemize}
      \item $d$ is measurable with respect to $(t, x) \in I \times\Omega $ for all $u \in \R$ and
        continuously differentiable  with respect to $u$ for almost all $(t, x) \in  I \times\Omega$.
      \item It holds $d(\cdot,\cdot,0)=0$.
      \item $\partial_ud$ is locally bounded, i.e., for each $M>0$ there
        is $C_M>0$ such that
        \begin{equation}\label{ass:b}
          \abs{\partial_ud(t,x,u)}\le C_M
        \end{equation}
        for  almost all $(t,x)\in I\times\Omega$ and all $u\in[-M,M]$.
      \item There is $\gamma\ge0$ such that $d$ fulfills the relaxed monotonicity condition
        \begin{equation}\label{ass:c}
          \partial_ud(t,x,u)\ge -\gamma
        \end{equation}
        for almost all $(t,x)\in I\times\Omega$ and all $u\in\R$.
    \end{itemize}
  \end{subequations}
\end{assumption}

\begin{remark}
  A typical setting fulfilling the assumption on $u_0$ would be $u_0\in H^2(\Om)\cap H^1_0(\Om)$.
  Then, $u_0\in U_{p_0,r_0}(\Om)$ and the relation $\frac1{p_0}+\frac N{2r_0}<1$ is valid for
  $r_0=2$ and any $p_0>\frac{4}{4-N}$.
\end{remark}

\begin{remark}
  Each of the assumptions on $f$ and $u_0$ can be replaced independently by the following
  assumptions, see the corresponding Remarks~\ref{rem:1} and~\ref{rem:2} below.
  \begin{itemize}
    \item Let $f\in L^q(I;W^{-1,s}(\Omega))$ for $q\in(1,\infty)$ and $s\in\bigl(N,\infty\bigr)$
      satisfying $\frac1q+\frac N{2s}<\frac12$.
    \item Let $u_0 \in \widetilde
      U_{q_0,s_0}(\Om)=(W^{-1,s_0}(\Om),\operatorname{Dom}_{-1,s_0}(-\Delta))_{1-\frac1{q_0},q_0}$
      for $q_0\in(1,\infty)$ and $s_0\in\bigl(N,\infty\bigr)$ satisfying $\frac1{q_0}+\frac
      N{2s_0}<\frac12$.
  \end{itemize}
  A typical setting fulfilling this assumption on $u_0$ would be $u_0\in W^{1,s_0}_0(\Om)$ with some
  $s_0>N$. Then, $u_0\in \widetilde U_{q_0,s_0}(\Om)$ and the relation $\frac1{q_0}+\frac
  N{2s_0}<\frac12$ is valid for any $q_0>\frac{2s_0}{s_0-N}$.
\end{remark}

To state the existence and boundedness of the solution to~\eqref{eq:heatEquation}, we need the
following lemma.

\begin{lemma}\label{lem:Xpr}
  Under the assumptions on $p_0$ and $r_0$ from Assumption~\ref{ass:fduz}, there is $\alpha>0$ such that
  \[
    U_{p_0,r_0}(\Om)\hookrightarrow C^\alpha(\Om)\hookrightarrow L^\infty(\Om).
  \]
\end{lemma}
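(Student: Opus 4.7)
The plan is to identify the interpolation space $U_{p_0,r_0}(\Om)$ with a Besov space and then apply a Sobolev-type embedding into a Hölder space. The second inclusion $C^\alpha(\Om)\hookrightarrow L^\infty(\Om)$ is immediate for bounded $\Om$, so all the work lies in the first.

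First I would use elliptic regularity on the convex polygonal/polyhedral domain $\Om$ to identify $\operatorname{Dom}_{0,r_0}(-\De)$ with $W^{2,r_0}(\Om)\cap W^{1,r_0}_0(\Om)$ (with equivalent norms) for the admissible $r_0$. This is the step where the convexity of $\Om$ enters, and it requires citing a $W^{2,r}$-regularity result for the Dirichlet Laplacian on convex polygons/polyhedra. With this identification in hand, the definition \eqref{eq:Xpr} becomes
\[
U_{p_0,r_0}(\Om)=\bigl(L^{r_0}(\Om),\,W^{2,r_0}(\Om)\cap W^{1,r_0}_0(\Om)\bigr)_{1-\frac1{p_0},p_0}.
\]

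Second, I would apply standard real interpolation results between $L^{r_0}(\Om)$ and $W^{2,r_0}(\Om)\cap W^{1,r_0}_0(\Om)$ to embed this space continuously into the Besov space $B^{s}_{r_0,p_0}(\Om)$ with differentiability order $s=2(1-\tfrac{1}{p_0})$. (Only a one-sided embedding is needed, which avoids subtleties about boundary-trace conditions in the Besov scale.)

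Third, I would rewrite the assumption $\tfrac1{p_0}+\tfrac{N}{2r_0}<1$ as $s-\tfrac{N}{r_0}=2(1-\tfrac1{p_0})-\tfrac{N}{r_0}>0$ and invoke the Besov embedding $B^{s}_{r_0,p_0}(\Om)\hookrightarrow C^\alpha(\Om)$ for any $\alpha\in\bigl(0,s-\tfrac{N}{r_0}\bigr)$ (using, e.g., Triebel's embedding theorems on Lipschitz domains). Choosing such an $\alpha>0$ completes the chain.

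The main obstacle is the first step: in three dimensions $W^{2,r}$-regularity for the Dirichlet Laplacian on convex polyhedra is known to hold only in a restricted range of $r$ determined by the edge and vertex angles, so the argument must be compatible with the range $r_0\in(N/2,\infty)$ allowed by Assumption~\ref{ass:fduz}. If the identification with $W^{2,r_0}$ fails for large $r_0$, one can instead use the interpolation inequality between two admissible exponents and reinterpolate, or bypass the pointwise identification by working directly with the heat-semigroup characterization of the $K$-functional and using Gaussian kernel bounds to estimate the Hölder seminorm—at the cost of a slightly longer argument but with the same conclusion.
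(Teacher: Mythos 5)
Your second and third steps are sound, but the first step --- identifying $\operatorname{Dom}_{0,r_0}(-\Delta)$ with $W^{2,r_0}(\Omega)\cap W^{1,r_0}_0(\Omega)$ --- is a genuine gap, and it is not confined to three dimensions. Assumption~\ref{ass:fduz} admits every $r_0\in(\frac N2,\infty)$, but already on a convex polygon with largest interior angle $\omega<\pi$ the Dirichlet Laplacian has $W^{2,r}$ regularity only for $r<\frac{2}{2-\pi/\omega}$, which approaches $2$ as $\omega\to\pi$; the edge singularities of a convex polyhedron behave analogously. So the identification fails exactly in the regime of large $r_0$ that the lemma must cover. Your first proposed repair (pass to a smaller, admissible exponent $r_1$ and reinterpolate) cannot close this gap: the hypothesis $\frac1{p_0}+\frac N{2r_0}<1$ is precisely the condition $s-\frac N{r_0}>0$ with $s=2(1-\frac1{p_0})$ that your Besov-to-H\"older embedding requires, so there is no slack to decrease the integrability exponent. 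For instance, with $N=3$, $r_0$ large and $p_0$ close to $1$ one has $s-\frac N{r_0}>0$ but $s-\frac N{r_1}<0$ for any $r_1$ near $2$, and the chain breaks.

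Your second proposed repair is the right idea and is essentially the route the paper takes, but it is only gestured at. The paper never invokes $W^{2,r}$ elliptic regularity: it first lowers the interpolation index slightly, $(L^{r_0}(\Omega),\operatorname{Dom}_{0,r_0}(-\Delta))_{1-\frac1{p_0},p_0}\hookrightarrow(L^{r_0}(\Omega),\operatorname{Dom}_{0,r_0}(-\Delta))_{1-\frac1{p_0}-\varepsilon,1}$, identifies the latter with the fractional power domain $\operatorname{Dom}_{0,r_0}\bigl((-\Delta)^{1-\frac1{p_0}-\varepsilon}\bigr)$ via Triebel's interpolation theorems, and then applies the embedding of fractional power domains into $C^\alpha(\Omega)$ from Disser--Rehberg--ter~Elst, valid whenever $1-\frac1{p_0}-\varepsilon>\frac N{2r_0}+\frac\alpha2$; that embedding rests on kernel and ultracontractivity estimates for the semigroup rather than on the elliptic $W^{2,r}$ theory. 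To complete your proof you would have to actually carry out (or cite) such a semigroup argument, at which point the Besov detour through $W^{2,r_0}$ becomes unnecessary.
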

\begin{proof}
  By Assumption~\ref{ass:fduz}, there are $\varepsilon,\alpha >0$ such that
  $1-\frac1{p_0}-\varepsilon>\frac N{2r_0}+\frac\alpha2$.  Using~\cite[Theorems 1.3.3 and
  1.15.2]{Triebel:1978} as well as~\cite[Theorem 2.10]{DisserRehbergElst:2015}, we get
  \[
    \begin{aligned}
      (L^{r_0}(\Om),\operatorname{Dom}_{0,r_0}(-\Delta))_{1-\frac1{p_0},p_0}&\hookrightarrow
      (L^{r_0}(\Om),\operatorname{Dom}_{0,r_0}(-\Delta))_{1-\frac1{p_0}-\varepsilon,1}\\&\hookrightarrow
      \operatorname{Dom}_{0,r_0}((-\Delta)^{1-\frac1{p_0}-\varepsilon})\hookrightarrow C^\alpha(\Om).
    \end{aligned}
  \]
  By the definition of $U_{p_0,r_0}(\Om)$ from~\eqref{eq:Xpr}, this states the assertion.
\end{proof}

\begin{remark}\label{rem:1}
  Using~\cite[Lemma 4.8]{DisserRehbergElst:2015}, a corresponding result also holds for $\widetilde
  U_{q_0,s_0}(\Om)$ with $\frac1{q_0}+\frac N{2s_0}<\frac12$.
\end{remark}

\begin{prop}\label{prop:ContSol}
  Under Assumption~\ref{ass:fduz}, problem~\eqref{eq:heatEquation} admits a unique solution
  $u\in L^\infty(\IOm)$ with a priori estimate
  \[
    \norm{u}_{L^\infty(\IOm)}\le
    C\bigl\{\norm{f}_{L^p(I;L^r(\Om))}+\norm{u_0}_{L^\infty(\Om)}\bigr\}.
  \]
\end{prop}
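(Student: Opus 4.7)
The plan is to prove existence by truncating the nonlinearity to make it globally Lipschitz, solving the truncated problem, and then removing the truncation by a uniform-in-$M$ $L^\infty$ bound. For $M>0$ I would let $d_M$ coincide with $d$ on $\{|u|\le M\}$ and extend it linearly for $|u|>M$ with slopes $\pa_u d(t,x,\pm M)$. Then $d_M(\cdot,\cdot,0)=0$ and $\pa_u d_M\ge-\gamma$ are preserved, and $d_M$ is globally Lipschitz in $u$ with constant $C_M$ coming from~\eqref{ass:b}. The truncated problem then admits a unique solution $u_M$ in the maximal parabolic regularity class $W^{1,p}(I;L^r(\Om))\cap L^p(I;\operatorname{Dom}_{0,r}(-\De))$ by a standard Banach fixed point: this space embeds into $C(\bar I;L^\infty(\Om))$ by the argument of Lemma~\ref{lem:Xpr} applied with $(p,r)$ in place of $(p_0,r_0)$, so the Nemytskii operator associated to $d_M$ is Lipschitz into $L^p(I;L^r(\Om))$, and combining with the linear maximal regularity estimate for $\pa_t-\De$ yields contraction on short time intervals which is then iterated.

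The crux is the uniform-in-$M$ bound $\|u_M\|_{L^\infty(\IOm)}\le C$. I would split $u_M=v+w$, where $v$ solves the linear problem $\pa_t v-\De v=f$, $v(0)=u_0$. Writing $v(t)=e^{t\De}u_0+\int_0^t e^{(t-s)\De}f(s)\,ds$ and using $L^\infty$-contractivity of $e^{t\De}$ together with the ultracontractive estimate $\|e^{t\De}g\|_\infty\le Ct^{-N/(2r)}\|g\|_{L^r}$ plus Hölder in time — the Aronson--Serrin condition $\frac1p+\frac N{2r}<1$ makes the resulting kernel integrable — one obtains
\[
\|v\|_{L^\infty(\IOm)}\le C\bigl(\|f\|_{L^p(I;L^r(\Om))}+\|u_0\|_{L^\infty(\Om)}\bigr)=:C_0,
\]
with $\|u_0\|_\infty<\infty$ coming from Lemma~\ref{lem:Xpr}. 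The remainder $w$ satisfies $\pa_t w-\De w+d_M(t,x,v+w)=0$, $w(0)=0$, and \eqref{ass:b} applied on $[-C_0,C_0]$ bounds $|d_M(t,x,v(t,x))|\le D_0$ for some $D_0=D_0(C_0)$.

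The main obstacle is the uniform $L^\infty$ bound on $w$; my plan is an explicit supersolution/comparison argument. Take $\phi(t)=\frac{D_0}{2\gamma}(e^{2\gamma t}-1)$ for $\gamma>0$, or $\phi(t)=D_0 t$ if $\gamma=0$; $\phi$ depends only on $t$, so $\Delta\phi=0$. Writing $d_M(v+\phi)=d_M(v)+[d_M(v+\phi)-d_M(v)]$, the second bracket is $\ge-\gamma\phi$ by \eqref{ass:c} (integrating $\pa_u d_M\ge-\gamma$ along $[v,v+\phi]$), so $\pa_t\phi-\De\phi+d_M(v+\phi)\ge0$ follows by a direct computation — the exponential growth rate of $\phi$ is tuned to compensate the $-\gamma$ in the relaxed monotonicity and leave a margin absorbing $D_0$. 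Then $\phi$ is a supersolution; testing $\pa_t(w-\phi)-\De(w-\phi)+[d_M(v+w)-d_M(v+\phi)]\le0$ against $(w-\phi)^+\in H^1_0(\Om)$ (note $(w-\phi)|_{\pa\Om}=-\phi\le0$) and using $[d_M(v+w)-d_M(v+\phi)](w-\phi)^+\ge-\gamma\bigl((w-\phi)^+\bigr)^2$ from \eqref{ass:c} yields $\tfrac{d}{dt}\|(w-\phi)^+\|_{L^2}^2\le 2\gamma\|(w-\phi)^+\|_{L^2}^2$, and Gronwall with zero initial data forces $w\le\phi$. The symmetric argument gives $w\ge-\phi$, hence $\|u_M\|_{L^\infty(\IOm)}\le C_0+\|\phi\|_\infty\le C(T,\gamma,C_0)$ uniformly in $M$.

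For $M$ exceeding this uniform bound, $u_M$ coincides with a bounded solution of~\eqref{eq:heatEquation} satisfying the asserted a priori estimate. Uniqueness among bounded solutions is then obtained by the same test-function/Gronwall argument applied to the difference $u_1-u_2$ of two bounded solutions: \eqref{ass:c} gives $\tfrac d{dt}\|u_1-u_2\|_{L^2}^2\le 2\gamma\|u_1-u_2\|_{L^2}^2$, and $u_1(0)=u_2(0)$ forces $u_1=u_2$.
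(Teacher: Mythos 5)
Your route is genuinely different from the paper's: the paper does not prove this proposition from scratch, but only verifies the hypotheses of a known result --- the sign condition $d(\cdot,\cdot,u)u=(d(\cdot,\cdot,u)-d(\cdot,\cdot,0))u\ge-\gamma u^2$ following from~\eqref{ass:c}, and $u_0\in L^\infty(\Om)$ from Lemma~\ref{lem:Xpr} --- and then cites \cite[Theorem~5.1]{Casas:1997}. Your truncation $d_M$ (which is precisely the paper's own $d_R$ from Section~5), the fixed-point existence for the truncated problem, the Aronson--Serrin bound for the linear part, the barrier computation for $\phi$, and the Gronwall uniqueness argument are all sound as far as they go, and they have the merit of being self-contained.

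There is, however, a genuine gap between what your argument delivers and what the proposition asserts. Your final bound reads $\norm{u}_{L^\infty(\IOm)}\le C_0+\norm{\phi}_{L^\infty(I)}$ with $\norm{\phi}_{L^\infty(I)}=\frac{D_0}{2\gamma}\bigl(e^{2\gamma T}-1\bigr)$ and $D_0=\sup_{\abs{s}\le C_0}\abs{d(\cdot,\cdot,s)}\le C_{C_0}C_0$ by~\eqref{ass:b}. The constant $C_{C_0}$ is the local Lipschitz modulus of $d$ on $[-C_0,C_0]$ and may grow arbitrarily fast in $C_0$ (for $d(u)=e^u-1$ it is of order $e^{C_0}$), so what you obtain is a finiteness statement, not the asserted estimate $C\bigl\{\norm{f}_{L^p(I;L^r(\Om))}+\norm{u_0}_{L^\infty(\Om)}\bigr\}$ with $C$ independent of the data. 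The repair stays entirely within your own toolbox: do not peel off $v$ and bound $d_M(\cdot,\cdot,v)$ pointwise. Instead compare $u_M$ itself with the barrier $\psi=e^{\gamma t}\bigl(\norm{u_0}_{L^\infty(\Om)}+\zeta\bigr)$, where $\zeta\ge0$ solves~\eqref{eq:linRhs} with right-hand side $\abs{f}$ and satisfies $\norm{\zeta}_{L^\infty(\IOm)}\le C\norm{f}_{L^p(I;L^r(\Om))}$ by your ultracontractivity computation. Since $\psi\ge0$, condition~\eqref{ass:c} together with $d_M(\cdot,\cdot,0)=0$ gives $d_M(\cdot,\cdot,\psi)\ge-\gamma\psi$, whence $\pa_t\psi-\De\psi+d_M(\cdot,\cdot,\psi)\ge\gamma\psi+\abs{f}-\gamma\psi\ge f$, and your $(u_M-\psi)^+$ test-function argument yields $u_M\le\psi$, with $u_M\ge-\psi$ by symmetry; this is the linear bound $e^{\gamma T}\bigl\{\norm{u_0}_{L^\infty(\Om)}+C\norm{f}_{L^p(I;L^r(\Om))}\bigr\}$, uniformly in $M$, and it also replaces your $D_0$-based step entirely. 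One further small point: in the fixed-point step the trace space of $W^{1,p}(I;L^r(\Om))\cap L^p(I;\operatorname{Dom}_{0,r}(-\De))$ is $U_{p,r}(\Om)$, whereas $u_0$ is only assumed to lie in $U_{p_0,r_0}(\Om)$; split off the free evolution of $u_0$ first (it is bounded by $\norm{u_0}_{L^\infty(\Om)}$) and run the contraction with zero initial data.
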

\begin{proof}
  Property \eqref{ass:c} of Assumption~\ref{ass:fduz} implies
  $d(\cdot,\cdot,u)u=(d(\cdot,\cdot,u)-d(\cdot,\cdot,0))u\ge-\gamma u^2$. Further,
  Lemma~\ref{lem:Xpr} ensured $u_0\in L^\infty(\Om)$. This and the remaining assumptions imply the
  assumptions on $d$ made in~\cite{Casas:1997}.  Hence,~\cite[Theorem~5.1]{Casas:1997}  proves the
  assertion.  A similar result under the assumption that $f\in L^{\hat p}(\IOm)$ for $\hat p>\frac
  N2+1$ and $u_0\in L^\infty(\Om)$ can be found in~\cite[Lemma~A.1]{RaymondZidani:1999}.
\end{proof}

The goal of the remaining part of this section is to prove the Hölder continuity of the solution
of~\eqref{eq:heatEquation}. Before doing so, we need to establish some results for the following
linear homogeneous and inhomogeneous problems

\begin{equation}\label{eq:linRhs}
  \begin{aligned}
    \pa_t v(t,x)-\Delta v(t,x) &= g(t,x) & (t,x) &\in \IOm,\\
    v(t,x) &= 0    & (t,x) &\in I\times\pa\Omega,\\
    v(0,x) &= 0    & x &\in \Omega
  \end{aligned}
\end{equation}

and

\begin{equation}\label{eq:linU0}
  \begin{aligned}
    \pa_t w(t,x)-\Delta w(t,x) &= 0 & (t,x) &\in \IOm,\\
    w(t,x) &= 0    & (t,x) &\in I\times\pa\Omega,\\
    w(0,x) &= u_0(x)    & x &\in \Omega.
  \end{aligned}
\end{equation}

\begin{prop}\label{prop:ContHoelderRhs}
  Let $g\in L^p(I;L^r(\Om))$ with $\frac1p+\frac N{2r}<1$. Then, there are $\beta,\kappa>0$ depending
  on $p$ and $r$ such that the solution $v$ of~\eqref{eq:linRhs} fulfills $v\in C^\beta(I;C^\kappa(\Om))$
  with
  \[
    \norm{v}_{C^\beta(I;C^\kappa(\Om))}\le C \norm{g}_{L^p(I;L^r(\Om))}.
  \]
  Additionally , provided that $g\in L^{\hat p}(I;L^2(\Om))$ for some $1<\hat p<\infty$, it holds that
  $v\in W^{1,\hat p}(I;L^2(\Om))\cap L^{\hat p}(I;H^2(\Om))$ with the estimate
  \[
    \norm{\pa_t v}_{L^{\hat p}(I;L^2(\Om))}+\norm{\nabla^2 v}_{L^{\hat p}(I;L^2(\Om))}
    \le C_{\hat p}\norm{g}_{L^{\hat p}(I;L^2(\Om))}
  \]
  where $C_{\hat p}\le C\frac{\hat p^2}{\hat p-1}$.
\end{prop}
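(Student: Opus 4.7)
The plan is to establish both estimates via maximal parabolic regularity of the Dirichlet Laplacian on $\Om$, combined with trace embeddings and interpolation.

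For the Hölder estimate, I would first invoke maximal $L^p(L^r)$-regularity of $-\De$ with homogeneous Dirichlet boundary conditions on the convex polyhedral domain $\Om$. This yields $v\in W^{1,p}(I;L^r(\Om))\cap L^p(I;\operatorname{Dom}_{0,r}(-\De))$ together with $\norm{\pa_t v}_{L^p(I;L^r)}+\norm{\De v}_{L^p(I;L^r)}\le C\norm{g}_{L^p(I;L^r)}$. By the trace embedding of the maximal regularity space into $C(\bar I;U_{p,r}(\Om))$ (which is built into the definition of $U_{p,r}$ via real interpolation) together with Lemma~\ref{lem:Xpr} applied with $(p_0,r_0)$ replaced by $(p,r)$ (the hypothesis $\tfrac{1}{p}+\tfrac{N}{2r}<1$ is assumed), we obtain $v\in C(\bar I;C^\alpha(\Om))$ for some $\alpha>0$, giving spatial Hölder continuity uniformly in $t$. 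To add temporal regularity, I would use the one-dimensional Sobolev embedding $W^{1,p}(I;L^r(\Om))\hookrightarrow C^{1-1/p}(\bar I;L^r(\Om))$ and then interpolate: applying the standard Gagliardo--Nirenberg type inequality $\norm{w}_{L^\infty(\Om)}\le C\norm{w}_{L^r(\Om)}^\theta\norm{w}_{C^\alpha(\Om)}^{1-\theta}$ to $w=v(t,\cdot)-v(s,\cdot)$ with suitable $\theta\in(0,1)$ converts the $L^r$-Hölder-in-time bound into an $L^\infty$-Hölder-in-time bound. A triangle inequality $\abs{v(t,x)-v(s,y)}\le\abs{v(t,x)-v(t,y)}+\abs{v(t,y)-v(s,y)}$ then yields $v\in C^\beta(\bar I;C^\kappa(\Om))$ with $\kappa=\alpha$ and $\beta=(1-1/p)\theta$, together with the norm estimate.

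For the $L^{\hat p}(L^2)$ estimate, I would invoke de~Simon's maximal regularity theorem: since $-\De$ with Dirichlet BC generates a bounded analytic semigroup of contractions on the Hilbert space $L^2(\Om)$, one obtains $v\in W^{1,\hat p}(I;L^2(\Om))\cap L^{\hat p}(I;\operatorname{Dom}_{0,2}(-\De))$ with the required norm control. Convexity of $\Om$ gives $\operatorname{Dom}_{0,2}(-\De)=H^2(\Om)\cap H^1_0(\Om)$ with equivalent norm, so $\norm{\De v}_{L^{\hat p}(I;L^2)}$ controls $\norm{\na^2 v}_{L^{\hat p}(I;L^2)}$. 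For the explicit constant $C_{\hat p}\le C\hat p^2/(\hat p-1)$, the maximal regularity operator $g\mapsto\pa_t v$ can be represented as a Calderón--Zygmund-type singular integral in time with values in $L^2(\Om)$. Combining the $L^2(I;L^2)$-bound (with universal constant from de~Simon) with a weak-type $(1,1)$ endpoint via Marcinkiewicz interpolation yields the factor $\hat p/(\hat p-1)$ as $\hat p\downarrow 1$, and duality produces the factor $\hat p$ as $\hat p\uparrow\infty$, giving the stated product.

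The main obstacle is the precise tracking of the $\hat p$-dependence in the second estimate: abstract maximal regularity theorems typically deliver only qualitative constants, so producing the sharp scaling $\hat p^2/(\hat p-1)$ requires either careful Calderón--Zygmund bookkeeping for the heat semigroup kernel or invoking a sharp form of the theorem from the literature. The joint Hölder continuity in the first part is essentially routine bookkeeping once the two one-parameter estimates are in place.
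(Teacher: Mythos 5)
Your proposal is essentially correct, but it takes a different route from the paper: the paper's proof of this proposition is a pure citation (the H\"older estimate is quoted from Theorem~3.1 of Disser--Rehberg--ter~Elst, and the $L^{\hat p}(I;L^2(\Om))$ estimate with the explicit constant $C_{\hat p}\le C\hat p^2/(\hat p-1)$ from Lemma~2.1 of Leykekhman--Vexler, which in turn rests on Ashyralyev--Sobolevskii), whereas you reconstruct the arguments behind those references. Your reconstruction of the first part --- maximal $L^p(L^r)$ regularity, the trace embedding of the maximal regularity space into $C(\bar I;U_{p,r}(\Om))$, Lemma~\ref{lem:Xpr} with $(p,r)$ in place of $(p_0,r_0)$, and Gagliardo--Nirenberg interpolation to convert $C^{1-1/p}(\bar I;L^r(\Om))$ into temporal H\"older continuity in $L^\infty(\Om)$ --- is sound, but your final step only yields joint anisotropic H\"older continuity of $v$ on $\bar I\times\bar\Om$, not membership in $C^\beta(I;C^\kappa(\Om))$ with $\kappa=\alpha$ as you claim: the quantity $\norm{v(t)-v(s)}_{C^\alpha(\Om)}$ is merely bounded, not $O(\abs{t-s}^\beta)$. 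The repair is one more interpolation, $\norm{w}_{C^\kappa}\le C\norm{w}_{L^\infty}^{1-\kappa/\alpha}\norm{w}_{C^\alpha}^{\kappa/\alpha}$ applied to $w=v(t)-v(s)$ with any $\kappa<\alpha$, at the cost of shrinking $\beta$; since the proposition only asserts the existence of some $\beta,\kappa>0$, this is harmless. Your sketch of the second part (de~Simon on $L^2(\Om)$, Grisvard's $H^2$ estimate on convex domains, and Calder\'on--Zygmund plus Marcinkiewicz interpolation and duality to track the blow-up of the constant as $\hat p\to1$ and $\hat p\to\infty$) is exactly the mechanism behind the cited constant, though carrying it out rigorously is the genuinely technical point, and in practice one would cite the reference for it just as the paper does. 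What your approach buys is transparency about where the hypotheses $\frac1p+\frac N{2r}<1$ and the factor $\hat p^2/(\hat p-1)$ come from; what the paper's approach buys is brevity and a guarantee that the delicate constant-tracking has actually been carried out somewhere.
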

\begin{proof}
  The first result is proven, e.g., in~\cite[Theorem 3.1]{DisserRehbergElst:2015} setting $u_0=0$
  there. The second result can be found in~\cite[Lemma~2.1]{LeykekhmanVexler:2016}, which itself
  mainly relies on~\cite{AshyralyevSobolevski:1994} and~\cite{ElschnerRehbergSchmidt:2007}.
\end{proof}

\begin{prop}\label{prop:ContHoelderU0}
  Let $u_0\in U_{p_0,r_0}(\Om)$ with $\frac1{p_0}+\frac N{2r_0}<1$. Then, there are $\beta,\kappa>0$
  depending on $p_0$ and $r_0$ such that the solution $w$ of~\eqref{eq:linU0} fulfills $w\in
  C^{\beta}(I;C^{\kappa}(\Om))$ with
  \[
    \norm{w}_{C^{\beta}(I;C^{\kappa}(\Om))}\le C \norm{u_0}_{U_{p_0,r_0}(\Om)}.
  \]
  Additionally, provided that $u_0\in H^2(\Om)\cap H^1_0(\Om)$, it holds that $w\in
  W^{1,\infty}(I;L^2(\Om))\cap L^\infty(I;H^2(\Om))$ with the estimate
  \[
    \norm{\pa_t w}_{L^\infty(I;L^2(\Om))}+\norm{\nabla^2 w}_{L^\infty(I;L^2(\Om))}
    \le C\norm{\nabla^2 u_0}_{L^2(\Om)}.
  \]
\end{prop}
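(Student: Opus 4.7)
The proposition has two distinct assertions, and since \eqref{eq:linU0} is linear with $g=0$, each one can be treated separately using very different tools. The strategy is to handle the Hölder estimate by a direct invocation of the same general result used for Proposition~\ref{prop:ContHoelderRhs}, and the $H^2$-regularity estimate by elementary semigroup arguments combined with standard elliptic regularity on the convex polygonal/polyhedral domain $\Om$.

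For the first estimate, I would invoke \cite[Theorem 3.1]{DisserRehbergElst:2015} once more, this time with right-hand side $g=0$ and keeping $u_0$ as the sole source term. That theorem handles the fully inhomogeneous heat equation and its bound involves both $\norm{g}_{L^p(I;L^r(\Om))}$ and the norm of $u_0$ in the real-interpolation space $U_{p_0,r_0}(\Om)$ from \eqref{eq:Xpr}. The point is that $U_{p_0,r_0}(\Om)$ is precisely the trace space at $t=0$ for the maximal parabolic regularity class $W^{1,p_0}(I;L^{r_0}(\Om))\cap L^{p_0}(I;\operatorname{Dom}_{0,r_0}(-\Delta))$, and the assumption $\frac{1}{p_0}+\frac{N}{2r_0}<1$ leaves room for a mixed space–time Sobolev embedding into $C^{\beta}(I;C^{\kappa}(\Om))$ analogous to Lemma~\ref{lem:Xpr}. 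The output is exactly the asserted Hölder bound.

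For the second, $H^2$-regularity estimate I would argue by semigroup theory. If $u_0\in H^2(\Om)\cap H^1_0(\Om)$ then $u_0\in\operatorname{Dom}_{0,2}(-\Delta)$ and $\norm{\Delta u_0}_{L^2(\Om)}\le C\norm{\nabla^2 u_0}_{L^2(\Om)}$. Writing $w(t)=e^{t\Delta}u_0$ and using that $e^{t\Delta}$ is a contraction on $L^2(\Om)$ which commutes with $-\Delta$ on its domain, one obtains for every $t\in I$
\[
  \norm{\pa_t w(t)}_{L^2(\Om)}=\norm{\Delta w(t)}_{L^2(\Om)}=\norm{e^{t\Delta}\Delta u_0}_{L^2(\Om)}\le\norm{\Delta u_0}_{L^2(\Om)}.
\]
This already gives the $W^{1,\infty}(I;L^2(\Om))$ bound. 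The $L^\infty(I;H^2(\Om))$ bound then follows from the $H^2$-regularity of the Dirichlet Laplacian on convex polygonal/polyhedral domains, $\norm{\nabla^2 w(t)}_{L^2(\Om)}\le C\norm{\Delta w(t)}_{L^2(\Om)}$, taking the supremum in $t$.

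The only delicate point is the use of $H^2$-regularity of $-\Delta$ up to the boundary on $\Om$, which is not available on a general Lipschitz domain but is guaranteed here by convexity (Grisvard). Everything else is either a direct citation from the reference already used in Proposition~\ref{prop:ContHoelderRhs} or a contraction property of the analytic semigroup generated by $\Delta$, so no substantial obstacle is anticipated.
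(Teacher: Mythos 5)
Your proposal is correct and follows essentially the same route as the paper: the Hölder bound is a direct citation of \cite[Theorem 3.1]{DisserRehbergElst:2015} with vanishing right-hand side, and the second estimate is obtained by propagating $\Delta u_0$ through the homogeneous heat flow (the paper phrases this as $z=\Delta w$ solving the heat equation with initial datum $\Delta u_0$, which is exactly your commutation $\Delta e^{t\Delta}u_0=e^{t\Delta}\Delta u_0$) followed by elliptic $H^2$-regularity on the convex domain. No substantive difference.
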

\begin{proof}
  The first result is proven, e.g., in~\cite[Theorem 3.1]{DisserRehbergElst:2015} setting $f=0$
  there. The second result follows from standard estimates for $z=\Delta w$ solving
  \[
  \begin{aligned}
  \partial_t z-\Delta z &=0&\quad&\text{in }\IOm,\\
  z(0)&=\Delta u_0&&\text{on }\Om
  \end{aligned}
  \]
  and elliptic regularity.
\end{proof}

\begin{remark}\label{rem:2}
  Using~\cite[Theorem 4.5]{DisserRehbergElst:2015}, the results of the
  Propositions~\ref{prop:ContHoelderRhs} and~\ref{prop:ContHoelderU0} can also be proven under the
  assumptions $f\in L^q(I;W^{-1,s}(\Om))$ with $\frac1q+\frac N{2s}<\frac12$ and $u_0\in\tilde
  U_{q_0,s_0}(\Om)$ with $\frac1{q_0}+\frac N{2{s_0}}<\frac12$.
\end{remark}

Based on these lemmas, we can derive the main result of this section, namely the Hölder continuity
of the solution of~\eqref{eq:heatEquation}.

\begin{theorem}\label{th:regU}
  Let Assumption~\ref{ass:fduz} be fulfilled. Then, there are $\beta,\kappa>0$ such that the
  solution $u$ of~\eqref{eq:heatEquation} fulfills $u\in C^\beta(I;C^\kappa(\Om))$ with a priori
  estimate
  \[
    \norm{u}_{C^\beta(I;C^\kappa(\Om))}\le C
    \bigl\{\norm{f}_{L^p(I;L^r(\Om))}+\norm{u_0}_{U_{p_0,r_0}(\Om)}\bigr\}.
  \]
\end{theorem}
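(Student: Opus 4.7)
The plan is to freeze the nonlinearity and reduce everything to the linear problems~\eqref{eq:linRhs} and~\eqref{eq:linU0}. First I would invoke Proposition~\ref{prop:ContSol} to obtain a unique solution $u \in L^\infty(\IOm)$ together with the a priori bound
\[
  \norm{u}_{L^\infty(\IOm)} \le C\bigl\{\norm{f}_{L^p(I;L^r(\Om))}+\norm{u_0}_{L^\infty(\Om)}\bigr\},
\]
and then use Lemma~\ref{lem:Xpr} to replace $\norm{u_0}_{L^\infty(\Om)}$ by a constant times $\norm{u_0}_{U_{p_0,r_0}(\Om)}$. Setting $M:=\norm{u}_{L^\infty(\IOm)}$, the local boundedness~\eqref{ass:b} together with $d(\cdot,\cdot,0)=0$ yields the pointwise estimate $\abs{d(t,x,u(t,x))}\le C_M\abs{u(t,x)} \le C_M M$ for a.e.\ $(t,x) \in \IOm$, so $d(\cdot,\cdot,u)\in L^\infty(\IOm)$ and a fortiori $d(\cdot,\cdot,u)\in L^p(I;L^r(\Om))$, with its norm controlled by a constant depending only on $M$.

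Next I would split $u = v + w$, where $v$ solves~\eqref{eq:linRhs} with right-hand side $g:=f-d(\cdot,\cdot,u)\in L^p(I;L^r(\Om))$ and $w$ solves~\eqref{eq:linU0} with initial datum $u_0\in U_{p_0,r_0}(\Om)$. Uniqueness for the linear inhomogeneous Cauchy problem makes this decomposition legitimate. Applying Proposition~\ref{prop:ContHoelderRhs} to $v$ yields exponents $\beta_1,\kappa_1>0$ with
\[
  \norm{v}_{C^{\beta_1}(I;C^{\kappa_1}(\Om))}\le C\norm{g}_{L^p(I;L^r(\Om))},
\]
while Proposition~\ref{prop:ContHoelderU0} applied to $w$ produces exponents $\beta_2,\kappa_2>0$ with
\[
  \norm{w}_{C^{\beta_2}(I;C^{\kappa_2}(\Om))}\le C\norm{u_0}_{U_{p_0,r_0}(\Om)}.
\]
Choosing $\beta:=\min\set{\beta_1,\beta_2}$ and $\kappa:=\min\set{\kappa_1,\kappa_2}$ gives $u\in C^\beta(I;C^\kappa(\Om))$ by the triangle inequality, and combining the two estimates with the bound on $\norm{g}_{L^p(I;L^r(\Om))}$ yields an estimate of the form
\[
  \norm{u}_{C^\beta(I;C^\kappa(\Om))}\le C\bigl\{\norm{f}_{L^p(I;L^r(\Om))}+\norm{d(\cdot,\cdot,u)}_{L^p(I;L^r(\Om))}+\norm{u_0}_{U_{p_0,r_0}(\Om)}\bigr\}.
\]

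The only delicate point is to turn this into the claimed a priori estimate, which must be linear in the data norms. Here the nonlinear dependence enters through $C_M$ from~\eqref{ass:b}; however, the bound on $M$ from Proposition~\ref{prop:ContSol} depends only on $\norm{f}_{L^p(I;L^r(\Om))}$ and $\norm{u_0}_{U_{p_0,r_0}(\Om)}$, so $C_M$ and hence $\norm{d(\cdot,\cdot,u)}_{L^p(I;L^r(\Om))}$ are controlled by a (possibly nonlinear) function of these two data norms. The author's estimate must therefore be read with a constant $C$ that may depend monotonically on $\norm{f}_{L^p(I;L^r(\Om))}+\norm{u_0}_{U_{p_0,r_0}(\Om)}$; this is the step I expect to be the only subtle one, since everything else is a direct assembly of the linear results already established.
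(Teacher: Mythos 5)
Your proposal is correct and follows essentially the same route as the paper: decompose $u=v+w$ with $v$ solving~\eqref{eq:linRhs} for $g=f-d(\cdot,\cdot,u)$ and $w$ solving~\eqref{eq:linU0}, bound $d(\cdot,\cdot,u)$ in $L^p(I;L^r(\Om))$ via Proposition~\ref{prop:ContSol} and~\eqref{ass:b}, apply Propositions~\ref{prop:ContHoelderRhs} and~\ref{prop:ContHoelderU0}, and take the minimum of the resulting Hölder exponents. Your closing remark about the constant depending (through $C_M$) on the size of the data is a fair reading of how the paper's constant is also to be understood, so there is no gap.
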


\begin{proof}
  We write the solution $u$ of~\eqref{eq:heatEquation} as $u=v+w$ where $v$
  solves~\eqref{eq:linRhs} with right-hand side $g=f-d(\cdot,\cdot,u)$ and $w$
  solves~\eqref{eq:linU0}. Using Assumption~\ref{ass:fduz} and the boundedness of $u$ given by
  Proposition~\ref{prop:ContSol}, we get by~\eqref{ass:b}
  \[
    \begin{aligned}
      \norm{d(\cdot,\cdot,u)}_{L^p(I;L^r(\Om))}
      &=\norm{d(\cdot,\cdot,u)-d(\cdot,\cdot,0)}_{L^p(I;L^r(\Om))}\\
      &\le C\norm{u}_{L^\infty(\IOm)}\le
      C\bigl\{\norm{f}_{L^p(I;L^r(\Om))}+\norm{u_0}_{L^\infty(\Om)}\bigr\}.
    \end{aligned}
  \]
  Hence, $g$ lies in $L^p(I;L^r(\Om))$ and Proposition~\ref{prop:ContHoelderRhs} implies the
  existence of $\beta_1,\kappa_1>0$ such that
  \[
    \norm{v}_{C^{\beta_1}(I;C^{\kappa_1}(\Om))}\le C\norm{g}_{L^p(I;L^r(\Om))}\le C
    \bigl\{\norm{f}_{L^p(I;L^r(\Om))}+\norm{u_0}_{L^\infty(\Om)}\bigr\}.
  \]
  Further, by Proposition~\ref{prop:ContHoelderU0}, there are $\beta_2,\kappa_2>0$ such that
  \[
    \norm{w}_{C^{\beta_2}(I;C^{\kappa_2}(\Om))}\le C \norm{u_0}_{U_{p_0,r_0}(\Om)}.
  \]
  Then, setting $\beta=\min\set{\beta_1,\beta_2}$ and $\kappa=\min\set{\kappa_1,\kappa_2}$ and using
  Lemma~\ref{lem:Xpr} yields the assertion for $u=v+w$.
\end{proof}

\section{Discrete Problem}\label{sec:disc}

To introduce the time discontinuous Galerkin discretization for the problem, we partition  the
interval $(0,T]$ into subintervals $I_m = (t_{m-1}, t_m]$ of length $k_m = t_m-t_{m-1}$, where $0 =
t_0 < t_1 <\cdots < t_{M-1} < t_M =T$. The maximal and minimal time steps are denoted by $k
=\max_{m} k_m$ and $k_{\min}=\min_{m} k_m$, respectively.  

\begin{assumption}\label{ass:2}
  We impose the following conditions on the temporal mesh (as, e.g., in~\cite{LeykekhmanVexler:2017}
  or~\cite{DMeidner_RRannacher_BVexler_2011a}):
  \begin{itemize}
    \item There are constants $c_1,c_2>0$ independent of $k$ such that $k_{\min}\ge c_1k^{c_2}$.
    \item There is a constant $c>0$ independent of $k$ such that for all $m=1,2,\dots,M-1$ it
      holds $c^{-1}\le\frac{k_m}{k_{m+1}}\le c$.
    \item It holds $k\le\frac{1}{4}T$.
  \end{itemize}
  Further, let $\gamma\ge0$ be such that~\eqref{ass:c} holds. If $\gamma>0$, we make the following
  assumption on the smallness of $k$:
  \begin{itemize}
    \item There is $0<\rho<1$ such that $k$ fulfills $k\le\frac\rho\gamma$.
  \end{itemize}
  If $\gamma=0$, no further assumption on $k$ has to be made.
\end{assumption}

For the discretization in space with discretization parameter $h > 0$, let $\mathcal{T}$ denote  a
quasi-uniform triangulation of $\Om$  with mesh size $h$, i.e., $\mathcal{T} = \{\tau\}$ is a
partition of $\Om$ into cells (triangles or tetrahedrons) $\tau$ of diameter $h_\tau$ such that for
$h=\max_{\tau} h_\tau$,
\[
  \operatorname{diam}(\tau)\le h \le C |\tau|^{\frac{1}{N}}, \quad \forall \tau\in \mathcal{T}.
\]
Let $V_h$ be the set of all functions in $H^1_0(\Om)$ that are Lagrange polynomials of order $\nu\ge1$ on each
$\tau$. We consider the space-time finite element space
\[
  \Xkzh=\Set{v_{kh} \in L^2(I;V_h)| v_{kh,m}:=v_{kh}|_{I_m}\in \Ppol{0}(I_m;V_h), \ m=1,2,\dots,M},
\]
where $\Ppol{0}(I;V)$ is the space of constant polynomial functions in time with values in a Banach
space $V$.

Throughout, we denote by $P_h\colon L^2(\Om)\to V_h$ the spatial orthogonal $L^2$ projection and by
$R_h\colon H^1_0(\Om)\to V_h$ the spatial Ritz projection. Moreover, we introduce the
discrete Laplace operator $\Delta_h\colon V_h \to V_h$ defined by
\[
   (-\Delta_h v_h, \phi_h)_\Om = (\nabla v_h, \nabla \phi_h)_\Om \quad\forall\phi_h \in V_h.
\]
Further, we denote by $P_k$ the temporal
$L^2$ projection given for a function $v\in L^1(I)$ by
\[
  (P_kv)\bigr\rvert_{I_m}=\frac1{k_m}\int_{I_m}v(t)\,dt, \qquad m=1,2,\dots,M.
\]
Finally, the projection $\Pi_k$ is given for $v\in C(\bar I)$ by
\[
  (\Pi_kv)\bigr\rvert_{I_m}=v(t_m),\qquad m=1,2,\dots,M.
\]
The extension of these operators to space- and time-dependent functions is obvious. 

We will employ the following notation for time-dependent functions $v$:
\[
  v^+_m=\lim_{\eps\to 0^+}v(t_m+\eps), \quad v^-_m=\lim_{\eps\to 0^+}v(t_m-\eps), \quad [v]_m=v^+_m-v^-_m.
\]
Note, that by definition, for $v_{kh}\in\Xkzh$, it holds
\[
  v_{kh,m}^+=v_{kh,m+1}, \quad v^-_{kh,m}=v_{kh,m}, \quad [v_{kh}]_m=v_{kh,m+1}-v_{kh,m}.
\]

Based on these preparations, we define the bilinear form $B$ by
\begin{equation}\label{eq:B}
  B(u,\varphi)=\sum_{m=1}^M \langle \pa_t u,\varphi \rangle_{I_m \times \Omega} + (\na u,\na
  \varphi)_{\IOm}+\sum_{m=2}^M([u]_{m-1},\varphi_{m-1}^+)_\Om+(u_{0}^+,\varphi_{0}^+)_\Om,
\end{equation}
where $( \cdot,\cdot )_{\Omega}$ and $( \cdot,\cdot )_{I_m \times \Omega}$ are the usual $L^2$ space
and space-time inner products, $\langle \cdot,\cdot \rangle_{I_m \times \Omega}$ is the duality
pairing between $ L^2(I_m;H^{-1}(\Omega))$ and $ L^2(I_m;H^{1}_0(\Omega))$. Rearranging the terms in
\eqref{eq:B}, we obtain an equivalent (dual) expression for $B$:
\begin{equation}\label{eq:B_d}
  B(u,\varphi)= - \sum_{m=1}^M \langle u,\pa_t \varphi \rangle_{I_m \times \Omega} + (\na u,\na
  \varphi)_{\IOm}-\sum_{m=1}^{M-1} (u_m^-,[\varphi]_m)_\Om + (u_M^-,\varphi_M^-)_\Om.
\end{equation}
We note, that the first sum in~\eqref{eq:B} vanishes for $u=u_{kh}\in\Xkzh$ and the first sum
in~\eqref{eq:B_d} for$\phi=\phi_{kh}\in\Xkzh$, respectively. Hence, on $\Xkzh\times\Xkzh$, the
semilinear form $B$ can be reduced to
\begin{equation}\label{eq:B_primal}
  B(u_{kh},\varphi_{kh})=(\nabla
  u_{kh},\nabla\varphi_{kh})_{\IOm}+\sum_{m=2}^M([u_{kh}]_{m-1},\varphi_{kh,m})_\Om+(u_{kh,1},\varphi_{kh,1})_\Om
\end{equation}
and
\begin{equation}\label{eq:B_dual}
  B(u_{kh},\varphi_{kh})=(\nabla
  u_{kh},\nabla\varphi_{kh})_{\IOm}-\sum_{m=1}^{M-1}(u_{kh,m},[\varphi_{kh}]_m)_\Om+(u_{kh,M},\varphi_{kh,M})_\Om.
\end{equation}

Then, we define the fully discrete cG($1$)dG($0$) approximation $u_{kh} \in \Xkzh$
of~\eqref{eq:heatEquation} by
\begin{equation}\label{eq:discHeat}
  B(u_{kh},\varphi_{kh}) +
  (d(\cdot,\cdot,u_{kh}),\varphi_{kh})_{\IOm}=(f,\varphi_{kh})_{\IOm}+(u_0,\varphi_{kh,1})_\Om \quad\forall \varphi_{kh}\in \Xkzh.
\end{equation}


\begin{theorem}\label{th:exist_ukh}
  Under the Assumptions~\ref{ass:fduz} and~\ref{ass:2}, there is a unique solution $u_{kh}\in\Xkzh$
  of~\eqref{eq:discHeat}.
\end{theorem}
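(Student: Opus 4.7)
The plan is to recognize that the dG($0$) time discretization reduces~\eqref{eq:discHeat} to a sequence of decoupled stationary problems on $V_h$, one per time slab. Choosing a test function $\varphi_{kh}\in\Xkzh$ supported on a single interval $I_m$ and applying the primal form~\eqref{eq:B_primal}, I obtain for each $m=1,\dots,M$ and all $\phi_h \in V_h$ the nonlinear elliptic equation
\[
(u_{kh,m},\phi_h)_\Om+k_m(\na u_{kh,m},\na\phi_h)_\Om+\int_{I_m}(d(t,\cdot,u_{kh,m}),\phi_h)_\Om\,dt=(w_{m-1},\phi_h)_\Om+\int_{I_m}(f,\phi_h)_\Om\,dt,
\]
where $w_0=u_0$ and $w_{m-1}=u_{kh,m-1}$ for $m\ge 2$. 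The unknown $u_{kh,m}\in V_h$ is thus determined inductively, and the task reduces to proving existence and uniqueness of the $V_h$-problem for each $m$.

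To analyze that problem I introduce the nonlinear operator $A\colon V_h\to V_h^*$ defined by $\langle Av,\phi_h\rangle=(v,\phi_h)_\Om+k_m(\na v,\na\phi_h)_\Om+\int_{I_m}(d(t,\cdot,v),\phi_h)_\Om\,dt$. Because every $v\in V_h$ is bounded in $L^\infty(\Om)$, the local bound~\eqref{ass:b} makes $d(t,\cdot,v)$ essentially bounded on $\ImOm$, so $A$ is well defined; its continuity on the finite-dimensional space $V_h$ follows from the a.e.\ continuity of $d$ in $u$ together with dominated convergence. For uniqueness I exploit the relaxed monotonicity~\eqref{ass:c}: writing $d(t,x,v_1)-d(t,x,v_2)=(v_1-v_2)\int_0^1\pa_ud(t,x,v_2+s(v_1-v_2))\,ds$ gives
\[
\langle Av_1-Av_2,v_1-v_2\rangle\ge(1-\gamma k_m)\norm{v_1-v_2}_{L^2(\Om)}^2+k_m\norm{\na(v_1-v_2)}_{L^2(\Om)}^2.
\]
Assumption~\ref{ass:2} (either $\gamma=0$ or $k\le\rho/\gamma$) implies $1-\gamma k_m\ge 1-\rho>0$, so $A$ is strictly monotone and the solution, if it exists, is unique. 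The same computation with $v_2=0$, combined with $d(\cdot,\cdot,0)=0$, yields $\langle Av,v\rangle\ge(1-\rho)\norm{v}_{L^2(\Om)}^2+k_m\norm{\na v}_{L^2(\Om)}^2$, hence coercivity. The standard Brouwer-based surjectivity result for continuous coercive operators on finite-dimensional spaces then furnishes a solution $u_{kh,m}\in V_h$ of $Av=\ell$, with $\ell\in V_h^*$ representing the prescribed right-hand side.

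The only genuine subtlety is the absence of any growth condition on $d$, which in infinite dimensions would force one into an Orlicz-type framework; but in the discrete setting every $v\in V_h$ is automatically in $L^\infty(\Om)$, so~\eqref{ass:b} together with the relaxed monotonicity~\eqref{ass:c} and the time-step restriction $k\le\rho/\gamma$ supply well-definedness, strict monotonicity, and coercivity simultaneously. Iterating the single-step existence and uniqueness over $m=1,\dots,M$ produces the unique $u_{kh}\in\Xkzh$ solving~\eqref{eq:discHeat}.
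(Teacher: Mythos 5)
Your proposal is correct and follows essentially the same route as the paper: reduce~\eqref{eq:discHeat} via~\eqref{eq:B_primal} to a decoupled per-time-step nonlinear problem on $V_h$, observe that the shifted nonlinearity $u+k_m\bar d_m(\cdot,u)$ has derivative bounded below by $1-k_m\gamma\ge 1-\rho>0$ thanks to~\eqref{ass:c} and Assumption~\ref{ass:2}, and conclude by a Brouwer-type argument. The only difference is that you spell out the monotone-operator details (well-definedness, strict monotonicity, coercivity) that the paper delegates to a citation.
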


\begin{proof}
  Using~\eqref{eq:B_primal}, problem~\eqref{eq:discHeat} can be written as time stepping scheme for
  $u_{kh,m}=u_{kh}\bigr\rvert_{I_m}$ for $m=1,2,\dots, M$ as follows:
  \[
      k_m(\nabla u_{kh,m},\nabla
      \varphi_h)_\Om+(u_{kh,m}+k_m\bar d_m(\cdot,u_{kh,m}),\varphi_h)_\Om=(u_{kh,m-1}+k_m\bar f_m,\varphi_h)_\Om\quad\forall \varphi_h\in V_h,
  \]
  where $u_{kh,0}=P_hu_0$ and the mean values $\bar d_m$ and $\bar f_m$ are given on $I\times\Om$ by
  \[
    \bar d_m(x,u)=\frac1{k_m}\int_{I_m}d(t,x,u)\,dt~\text{for}~u\in\R\quad\text{and}\quad\bar
    f_m(x)=\frac1{k_m}\int_{I_m}f(t,x)\,dt.
  \]
  Hence, in each time step, the following discrete semilinear elliptic equation for $u_{kh,m}$ with
  given $u_{kh,m-1}$ has to be solved:
  \begin{equation}\label{eq:3}
      k_m(\nabla u_{kh,m},\nabla \varphi_h)_\Om+(\tilde
      d_m(\cdot,u_{kh,m}),\varphi_h)_\Om=(u_{kh,m-1}+k_m\bar f_m,\varphi_h)_\Om\quad\forall
      \varphi_h\in V_h.
  \end{equation}
  The nonlinearity $\tilde d_m$ is given for $u\in\R$ as $\tilde d_m(\cdot,u)=u+k_m\bar
  d_m(\cdot,u)$. Hence, Assumption~\ref{ass:2} and~\eqref{ass:c} imply $\partial_u\tilde
  d(\cdot,u)\ge1-k_m\gamma\ge1-\rho>0$ for $\gamma>0$ and $\partial_u\tilde d(\cdot,u)\ge1$
  independent of $k_m$ for $\gamma=0$.  The remaining assumptions on $d$ carry over to $\tilde d$
  and ensures the unique solvability of~\eqref{eq:3} for $m=1,2,\dots,M$ by application of Brouwer's
  fixed-point theorem, see, e.g.,~\cite{CasasMateos:2002}.
\end{proof}

\section{Discrete maximal parabolic estimates for a linear auxiliary equation}\label{sec:aux}

For given $g\in L^1(\IOm)$, we consider the discrete linear auxiliary equation for $v_{kh}\in\Xkzh$
\begin{equation}\label{eq:vkh}
  B(v_{kh},\varphi_{kh}) + (b
  v_{kh},\varphi_{kh})_{\IOm}=(g,\varphi_{kh})_{\IOm} \quad \forall
  \varphi_{kh}\in \Xkzh
\end{equation}
with a coefficient $b \in L^\infty(\IOm)$ fulfilling $b(t,x) \ge -\gamma$ for $\gamma\ge 0$ from
Assumption~\ref{ass:fduz} and almost all $(t,x)\in I\times\Omega$.

For the solution $v_{kh}$ of~\eqref{eq:vkh}, discrete maximal parabolic estimates in various norms
are available in the literature in the case $b=0$, see~\cite{LeykekhmanVexler:2017}. In this
section, we extend these results to the case $b\neq0$.  The  extended results will be used later in
the Section~\ref{sec:bound} and~\ref{sec:estimates} to prove the results for the semilinear problem. 

Before doing so, we start with an existence result for~\eqref{eq:vkh}.

\begin{theorem}
  Under Assumption~\ref{ass:2}, there is a unique solution $v_{kh}\in\Xkzh$
  of~\eqref{eq:vkh}.
\end{theorem}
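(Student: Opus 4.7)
The problem is linear and the spatial-temporal ansatz space $\Xkzh$ is finite-dimensional, so uniqueness will automatically imply existence. The plan is to mimic the time-stepping reduction used in the proof of Theorem~\ref{th:exist_ukh} and reduce \eqref{eq:vkh} to a sequence of linear elliptic problems on $V_h$, one per subinterval $I_m$, each of which is uniquely solvable by a coercivity argument.

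More precisely, first I test \eqref{eq:vkh} against $\varphi_{kh}\in\Xkzh$ supported on a single subinterval $I_m$ and use the primal representation \eqref{eq:B_primal} of $B$. Since $v_{kh}$ is time-constant on $I_m$ with value $v_{kh,m}$, the integral involving $b$ reduces to a spatial pairing with the averaged coefficient $\bar b_m(x)=\tfrac1{k_m}\int_{I_m}b(t,x)\,dt$, and similarly $g$ gets replaced by $\bar g_m=\tfrac1{k_m}\int_{I_m}g(t,\cdot)\,dt$. This yields, with the convention $v_{kh,0}=0$ (corresponding to the absence of an initial-data term on the right-hand side of \eqref{eq:vkh}), the time-stepping scheme
\[
  k_m(\nabla v_{kh,m},\nabla\varphi_h)_\Om + \bigl((1+k_m\bar b_m)v_{kh,m},\varphi_h\bigr)_\Om = (v_{kh,m-1}+k_m\bar g_m,\varphi_h)_\Om
\]
for all $\varphi_h\in V_h$, $m=1,2,\dots,M$.

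Next I verify that the bilinear form $a_m(u_h,\varphi_h)=k_m(\nabla u_h,\nabla\varphi_h)_\Om+((1+k_m\bar b_m)u_h,\varphi_h)_\Om$ on $V_h\times V_h$ is coercive. Using $b\ge-\gamma$ a.e.\ I obtain $\bar b_m\ge-\gamma$, hence $1+k_m\bar b_m\ge 1-k_m\gamma$. Assumption~\ref{ass:2} gives $k_m\le k\le\rho/\gamma$ when $\gamma>0$, so $1+k_m\bar b_m\ge 1-\rho>0$; when $\gamma=0$ this quantity is simply $\ge 1$. Consequently
\[
  a_m(u_h,u_h)\ge k_m\norm{\nabla u_h}^2_{L^2(\Om)}+(1-\rho)\norm{u_h}^2_{L^2(\Om)},
\]
which together with boundedness of $a_m$ (following from $b\in L^\infty(\IOm)$) yields coercivity and continuity on the finite-dimensional space $V_h$.

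By Lax--Milgram (or equivalently by invertibility of a positive-definite matrix) the elliptic problem above admits a unique solution $v_{kh,m}\in V_h$ given $v_{kh,m-1}$. Induction on $m$ starting from $v_{kh,0}=0$ then yields a unique $v_{kh}\in\Xkzh$ satisfying \eqref{eq:vkh}. No real obstacle is expected; the only point requiring care is the use of the time-step smallness condition from Assumption~\ref{ass:2} to absorb the possibly negative part of $b$, exactly as in the proof of Theorem~\ref{th:exist_ukh}.
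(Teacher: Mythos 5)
Your proof is correct and follows essentially the same route as the paper, which simply observes that \eqref{eq:vkh} is the special case $d(\cdot,\cdot,v)=bv$ of \eqref{eq:discHeat} (with vanishing initial data) and invokes Theorem~\ref{th:exist_ukh}; you unpack that reduction explicitly and replace the Brouwer fixed-point step by Lax--Milgram, which is the natural simplification in the linear setting. The key point --- using $k_m\gamma\le\rho<1$ from Assumption~\ref{ass:2} to obtain $1+k_m\bar b_m\ge 1-\rho>0$ --- is exactly the mechanism used in the paper's proof of Theorem~\ref{th:exist_ukh}.
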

\begin{proof}
  By setting $d(\cdot,\cdot,v_{kh})=bv_{kh}$, the asseretion follwos directly from
  Theorem~\ref{th:exist_ukh}.
\end{proof}

\begin{lemma}\label{lemma:1}
  Let Assumption~\ref{ass:2} be fulfilled and $g \in L^1(I;L^2(\Omega))$. Then, for the solution
  $v_{kh}\in\Xkzh$ of~\eqref{eq:vkh} there holds
  \[
    \norm{v_{kh}}_{L^\infty(I;L^2(\Omega))} \le C \norm{g}_{L^1(I;L^2(\Omega))}
  \]
  with a constant $C$ independent of $h$, $k$, $g$, and $b$.
\end{lemma}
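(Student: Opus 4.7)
The plan is to reduce \eqref{eq:vkh} to a time-stepping scheme and then run an elementary $L^2$ energy estimate closed by a discrete Gronwall argument; the lower bound on $b$ together with the smallness assumption on $k\gamma$ from Assumption~\ref{ass:2} will exactly compensate the lack of coercivity.

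Using the primal expression~\eqref{eq:B_primal} for $B$ in the same way as in the proof of Theorem~\ref{th:exist_ukh}, the discrete problem~\eqref{eq:vkh} decouples, time slab by time slab, into
\[
(v_{kh,m} - v_{kh,m-1}, \varphi_h)_\Om + k_m(\nabla v_{kh,m}, \nabla \varphi_h)_\Om + k_m(\bar b_m v_{kh,m}, \varphi_h)_\Om = k_m(\bar g_m, \varphi_h)_\Om \quad \forall \varphi_h\in V_h,
\]
with $v_{kh,0}:=0$ and
\[
\bar b_m := \frac{1}{k_m}\int_{I_m} b(t,\cdot)\,dt \ge -\gamma,\qquad \bar g_m := \frac{1}{k_m}\int_{I_m} g(t,\cdot)\,dt.
\]
Since $v_{kh}$ is constant in time on each $I_m$, it suffices to bound $\max_m\norm{v_{kh,m}}_{L^2(\Om)}$.

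Next, I would test with $\varphi_h = v_{kh,m}$, apply the identity $(a-b,a)_\Om \ge \tfrac12(\norm{a}_{L^2(\Om)}^2 - \norm{b}_{L^2(\Om)}^2)$, drop the nonnegative gradient term, and use $\bar b_m \ge -\gamma$ together with Cauchy-Schwarz on the right. Writing $c_m := \norm{v_{kh,m}}_{L^2(\Om)}$, this gives
\[
c_m^2 - c_{m-1}^2 \le 2\gamma k_m c_m^2 + 2 k_m \norm{\bar g_m}_{L^2(\Om)}\, c_m,
\]
and, after dividing by $c_m + c_{m-1}$ (the case $c_m=c_{m-1}=0$ being trivial),
\[
(1-2\gamma k_m)\,c_m \le c_{m-1} + 2 k_m \norm{\bar g_m}_{L^2(\Om)}.
\]
Under Assumption~\ref{ass:2} one has $\gamma k_m \le \rho$ (with $\rho$ bounded away from $1/2$; this is the only place where we need to be mildly careful about the size of $\rho$), so $(1-2\gamma k_m)^{-1} \le e^{C\gamma k_m}$. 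Iterating from $c_0 = 0$ and using $\sum_{j=1}^M k_j\norm{\bar g_j}_{L^2(\Om)} \le \norm{g}_{L^1(I;L^2(\Om))}$ yields
\[
c_m \le 2\,e^{C\gamma T}\sum_{j=1}^m k_j \norm{\bar g_j}_{L^2(\Om)} \le C\,\norm{g}_{L^1(I;L^2(\Om))},
\]
with a constant depending only on $\gamma$, $T$, and $\rho$; this is the asserted bound.

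The main obstacle is the lack of coercivity caused by the possibly negative coefficient $b$: the sign condition $b \ge -\gamma$ only produces the spurious term $2\gamma k_m c_m^2$ in the energy estimate, and the CFL-like assumption $k\le\rho/\gamma$ from Assumption~\ref{ass:2} is exactly what is needed to absorb it into the left-hand side with a constant independent of $\norm{b}_{L^\infty(\IOm)}$ and of the discretization parameters $k$ and $h$.
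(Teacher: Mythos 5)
Your route is genuinely different from the paper's: you run a direct forward energy estimate on the time-stepping form of \eqref{eq:vkh} and close it with a discrete Gronwall argument, whereas the paper argues by duality. There, one introduces the backward dual problem with terminal datum $v_{kh,M}$, makes the effective zero-order coefficient nonnegative via the discrete transformation $y_{kh,m}=z_{kh,m}\prod_{l=m}^M(1+\mu k_l)^{-1}$ with $\mu\ge\gamma/(1-\rho)$ (this substitution plays exactly the role of your exponential Gronwall factor), obtains $\norm{z_{kh}}_{L^\infty(I;L^2(\Om))}\le e^{\mu T}\norm{v_{kh,M}}_{L^2(\Om)}$, and concludes by pairing the dual solution with $g$. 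Your primal argument is more elementary and self-contained; the paper's formulation has the advantage of fitting the same duality template that is reused later (Lemma~\ref{lemma:4}, Lemma~\ref{lem:estimate}).

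There is, however, one concrete flaw that you half-acknowledge but do not repair: your recursion $(1-2\gamma k_m)\,c_m\le c_{m-1}+2k_m\norm{\bar g_m}_{L^2(\Om)}$ is only useful if $2\gamma k_m<1$, i.e.\ it requires $\rho<1/2$, while Assumption~\ref{ass:2} only guarantees $k\gamma\le\rho$ for some $\rho<1$. The factor $2$ is an artifact of passing through $c_m^2-c_{m-1}^2$ via the polarization inequality. It disappears if you instead keep the tested identity in the form
\[
  \norm{v_{kh,m}}_{L^2(\Om)}^2+k_m(\bar b_m v_{kh,m},v_{kh,m})_\Om
  \le (v_{kh,m-1},v_{kh,m})_\Om+k_m(\bar g_m,v_{kh,m})_\Om,
\]
which after Cauchy--Schwarz and division by $c_m$ gives $(1-\gamma k_m)\,c_m\le c_{m-1}+k_m\norm{\bar g_m}_{L^2(\Om)}$; since $1-\gamma k_m\ge 1-\rho>0$, the Gronwall step then goes through for every $\rho<1$ using $(1-\gamma k_m)^{-1}\le e^{C_\rho\gamma k_m}$. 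With that modification (and the observation $k_m\norm{\bar g_m}_{L^2(\Om)}\le\int_{I_m}\norm{g(t)}_{L^2(\Om)}\,dt$, so the sums are controlled by $\norm{g}_{L^1(I;L^2(\Om))}$), your proof is complete and yields a constant depending only on $\gamma$, $T$, and $\rho$, as required.
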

\begin{proof}~
  We consider the dual problem for $z_{kh} \in \Xkzh$ given by
  \[
    B(\varphi_{kh},z_{kh})  + (b \varphi_{kh},z_{kh})_{\IOm} =
    (v_{kh,M},\varphi_{kh,M})_\Om\quad\forall\varphi_{kh}\in\Xkzh.
  \]
  Using~\eqref{eq:B_dual}, $z_{kh,m}$ satisfies for $m=M-1,M-2,\dots,1$ the scheme
  \begin{equation}\label{eq:4}
    k_m(\nabla \phi_h,\nabla
    z_{kh,m})_\Om+(z_{kh,m}+k_m\bar b_mz_{kh,m},\varphi_h)_\Om=(z_{kh,m+1},\varphi_h)_\Om\quad\forall \varphi_h\in V_h,
  \end{equation}
  where $z_{kh,M}=v_{kh,M}$ and $\bar b_m$ is given as before by
  \[
    \bar b_m(x)=\frac1{k_m}\int_{I_m}b(t,x)\,dt.
  \]

  To proceed, we will first prove the boundedness of $z_{kh}$ in $L^\infty(I;L^2(\Om))$. To this
  end, we employ the discrete transformation argument from~\cite{LeykekhmanVexler:2017:II}.  For
  $\mu>0$ a sufficient large number to be chosen later let $y_{kh,m}$ be defined as
  \[
    y_{kh,m}=z_{kh,m}\prod_{l=m}^M\frac1{1+\mu k_l},\qquad m=1,2,\dots,M.
  \]
  Then, by~\eqref{eq:4}, we get
  \begin{multline*}
    k_m\prod_{l=m}^M(1+\mu k_l)(\nabla \phi_h,\nabla
    y_{kh,m})_\Om+\prod_{l=m}^M(1+\mu k_l)(y_{kh,m}+k_m\bar
    b_my_{kh,m},\varphi_h)_\Om\\=\prod_{l=m+1}^M(1+\mu k_l)(y_{kh,m+1},\varphi_h)_\Om \quad\forall \varphi_h\in V_h.
  \end{multline*}
  Dividing both sides by $\prod_{l=m+1}^M(1+\mu k_l)$ yields
  \begin{multline*}
    k_m(1+\mu k_m)(\nabla \phi_h,\nabla
    y_{kh,m})_\Om+(1+\mu k_m)(y_{kh,m}+k_m\bar
    b_my_{kh,m},\varphi_h)_\Om\\=(y_{kh,m+1},\varphi_h)_\Om \quad\forall \varphi_h\in V_h,
  \end{multline*}
  which can be rewritten as
  \begin{equation}\label{eq:5}
    k_m(1+\mu k_m)(\nabla \phi_h,\nabla
    y_{kh,m})_\Om+(y_{kh,m}+k_m\tilde b_my_{kh,m},\varphi_h)_\Om=(y_{kh,m+1},\varphi_h)_\Om\quad\forall \varphi_h\in V_h
  \end{equation}
  with $\tilde b_m=\bar b_m+ \mu(1+k_m\bar b_m)$. Using Assumption~\ref{ass:2} and choosing
  $\mu\ge\frac{\gamma}{1-\rho}$ yields
  \[
    \tilde b_m\ge -\gamma+\mu(1-k_m\gamma)\ge-\gamma+\mu(1-\rho)\ge0.
  \]

  Then, by testing~\eqref{eq:5} with $\varphi_h=y_{hk,m}$, we get
  $\norm{y_{kh,m}}^2_{L^2(\Om)}\le(y_{kh,m+1},y_{kh,m})_\Om$, which implies
  $\norm{y_{kh,m}}_{L^2(\Om)}\le \norm{y_{kh,m+1}}_{L^2(\Om)}$.  Using this recursively for
  $m=1,2,\dots,M-1$, we get
  \[
    \norm{y_{kh,1}}_{L^2(\Om)} \le \norm{y_{kh,M}}_{L^2(\Om)}=\norm{v_{kh,M}}_{L^2(\Om)}.
  \]
  Transforming back to $z_{kh,m}$ and using $1+\mu k_l\le e^{\mu k_l}$ yields
  \[
    \norm{z_{kh,1}}_{L^2(\Om)}=\norm{y_{kh,1}}_{L^2(\Om)}\prod_{l=1}^M(1+\mu k_l)\le e^{\mu T}\norm{v_{kh,M}}_{L^2(\Om)}
  \]
  and hence
  \[
    \norm{z_{kh}}_{L^\infty(I;L^2(\Om))}\le e^{\mu T} \norm{v_{kh,M}}_{L^2(\Om)}.
  \]
  Using this and~\eqref{eq:vkh}, we obtain
  \[
    \begin{aligned}
      \norm{v_{kh,M}}_{L^2(\Omega)}^2 
      &= B(v_{kh},z_{kh}) +(b v_{kh},z_{kh})_{\IOm} = (g,z_{kh})_{\IOm} \\
      &\le \norm{g}_{L^1(I;L^2(\Omega))} \norm{z_{kh}}_{L^\infty(I;L^2(\Omega))} \le
      e^{\mu T} \norm{g}_{L^1(I;L^2(\Omega))}\norm{v_{kh,M}}_{L^2(\Omega)},
  \end{aligned}
  \]
  which completes the proof.
\end{proof}

The next lemma provides a discrete maximal parabolic estimate for $v_{kh}$ with respect to the
$L^\infty(I;L^2(\Om))$ norm.

\begin{lemma}\label{lemma:2}
  Let Assumption~\ref{ass:2} be fulfilled and $g \in L^\infty(I;L^2(\Omega))$. Then, for the
  solution $v_{kh}\in\Xkzh$ of~\eqref{eq:vkh} there holds
  \[
    \norm{\Delta_h v_{kh}}_{L^\infty(I;L^2(\Omega))}+\max_{1\le m\le
    M}\left\lVert\frac{[v_{kh}]_{m-1}}{k_m}\right\rVert_{L^2(\Omega)} \le C \lk \bigl\{1 +
    \norm{b}_{L^\infty(\IOm)}\bigr\} \norm{g}_{L^\infty(I;L^2(\Omega))}
  \]
  with a constant $C$ independent of $h$, $k$, $g$, and $b$.
\end{lemma}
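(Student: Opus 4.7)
The plan is to reduce the claim to the $b=0$ version of the discrete maximal parabolic estimate (available in \cite{LeykekhmanVexler:2017}) by absorbing $bv_{kh}$ into the right-hand side, and then to use the previous Lemma~\ref{lemma:1} to control the emerging $L^\infty(I;L^2(\Omega))$ norm of $v_{kh}$ itself.

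Concretely, I would rewrite~\eqref{eq:vkh} as
\[
  B(v_{kh},\varphi_{kh}) = (g - b v_{kh},\varphi_{kh})_{\IOm}\quad\forall \varphi_{kh}\in\Xkzh,
\]
so that $v_{kh}$ solves the purely linear parabolic discrete problem with right-hand side $\tilde g := g - b v_{kh} \in L^\infty(I;L^2(\Omega))$. The discrete maximal parabolic regularity result of~\cite{LeykekhmanVexler:2017} (applied with $b=0$) yields
\[
  \norm{\Delta_h v_{kh}}_{L^\infty(I;L^2(\Om))} + \max_{1\le m\le M}\left\lVert \frac{[v_{kh}]_{m-1}}{k_m}\right\rVert_{L^2(\Om)}
  \le C\lk \norm{\tilde g}_{L^\infty(I;L^2(\Om))}.
\]
The triangle inequality immediately gives
\[
  \norm{\tilde g}_{L^\infty(I;L^2(\Om))} \le \norm{g}_{L^\infty(I;L^2(\Om))} + \norm{b}_{L^\infty(\IOm)}\norm{v_{kh}}_{L^\infty(I;L^2(\Om))}.
\]

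To close the argument, I invoke Lemma~\ref{lemma:1}, whose constant is independent of $b$, together with the trivial embedding $L^\infty(I;L^2(\Om)) \hookrightarrow L^1(I;L^2(\Om))$:
\[
  \norm{v_{kh}}_{L^\infty(I;L^2(\Om))} \le C \norm{g}_{L^1(I;L^2(\Om))} \le CT \norm{g}_{L^\infty(I;L^2(\Om))}.
\]
Substituting this bound back, absorbing $T$ into the generic constant $C$, and collecting terms yields
\[
  \norm{\Delta_h v_{kh}}_{L^\infty(I;L^2(\Om))} + \max_{1\le m\le M}\left\lVert \frac{[v_{kh}]_{m-1}}{k_m}\right\rVert_{L^2(\Om)}
  \le C\lk\bigl\{1 + \norm{b}_{L^\infty(\IOm)}\bigr\} \norm{g}_{L^\infty(I;L^2(\Om))},
\]
as desired.

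The only non-routine point is to verify that the cited estimate from~\cite{LeykekhmanVexler:2017} is applicable under our mesh hypotheses (Assumption~\ref{ass:2}); this is exactly the setting in which it is formulated, so no extra work should be needed. The argument has the slightly unusual feature that the $b$-free bound from Lemma~\ref{lemma:1} is what ultimately controls the possibly large $bv_{kh}$ contribution -- this is precisely why it was important that the constant in Lemma~\ref{lemma:1} was independent of $b$.
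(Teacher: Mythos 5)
Your proposal is correct and follows essentially the same route as the paper: rewrite~\eqref{eq:vkh} with right-hand side $\tilde g = g - bv_{kh}$, apply the $b=0$ discrete maximal parabolic regularity result of~\cite{LeykekhmanVexler:2017}, and control $\norm{v_{kh}}_{L^\infty(I;L^2(\Omega))}$ via Lemma~\ref{lemma:1} (whose constant is independent of $b$). No substantive differences.
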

\begin{proof}
  The solution $v_{kh}\in\Xkzh$ of~\eqref{eq:vkh} fulfills
  \[
    B(v_{kh},\varphi_{kh}) =(\tilde g,\varphi_{kh})_{\IOm}\quad \forall \varphi_{kh}\in \Xkzh
  \]
  with $\tilde g = g - b v_{kh}$.
  Using Lemma~\ref{lemma:1}, we can estimate
  \[
    \begin{aligned}
      \norm{\tilde g}_{L^\infty(I;L^2(\Omega))}
      &\le \norm{g}_{L^\infty(I;L^2(\Omega))} + \norm{b}_{L^\infty(\IOm)} \norm{v_{kh}}_{L^\infty(I;L^2(\Omega))}\\
      &\le \norm{g}_{L^\infty(I;L^2(\Omega))} + \norm{b}_{L^\infty(\IOm)}
      \norm{g}_{L^1(I;L^2(\Omega))}\\
      &\le C \bigl\{1 + \norm{b}_{L^\infty(\IOm)}\bigr\}\norm{g}_{L^\infty(I;L^2(\Omega))}.
    \end{aligned}
  \]
  Applying the discrete maximal parabolic regularity result of~\cite[Theorem 2 and Corollary
  2]{LeykekhmanVexler:2017}, we obtain the desired estimate for $v_{kh}$.
\end{proof}

Before continuing with estimates for the solution of~\eqref{eq:vkh}, we recall for completeness two
well-known results for finite element functions.

\begin{lemma}\label{lem:DeltaH}
  For any $w_h\in V_h$ it, holds 
  \[
    \norm{w_h}_{L^\infty(\Om)}\le C\norm{\Delta_hw_h}_{L^2(\Om)} \qquad\text{and}\qquad
    \norm{w_h}_{L^2(\Om)}\le C\norm{\Delta_hw_h}_{L^1(\Om)}.
  \]
\end{lemma}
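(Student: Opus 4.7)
Both bounds reduce to three standard tools: elliptic regularity on the convex polygonal/polyhedral domain $\Om$, the Sobolev embedding $H^2(\Om)\hookrightarrow L^\infty(\Om)$ (valid since $N\in\{2,3\}$), and $L^\infty$-stability of the Ritz projection on quasi-uniform meshes. My plan is to prove the first inequality directly and to obtain the second by duality against it.

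\textbf{First inequality.} Given $w_h\in V_h$, I would introduce the weak $H^1_0$-solution $z$ of $-\Delta z=-\Delta_h w_h$, where the right-hand side is viewed as an element of $L^2(\Om)$. Testing with $\phi_h\in V_h$ shows $(\nabla z,\nabla \phi_h)_\Om=(-\Delta_h w_h,\phi_h)_\Om=(\nabla w_h,\nabla \phi_h)_\Om$, so $R_h z=w_h$. Convexity of $\Om$ then yields $z\in H^2(\Om)$ with $\norm{z}_{H^2(\Om)}\le C\norm{\Delta_h w_h}_{L^2(\Om)}$, and the Sobolev embedding gives $\norm{z}_{L^\infty(\Om)}\le C\norm{z}_{H^2(\Om)}$. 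It remains to invoke $L^\infty$-stability of the Ritz projection to conclude $\norm{w_h}_{L^\infty(\Om)}=\norm{R_h z}_{L^\infty(\Om)}\le C\norm{z}_{L^\infty(\Om)}\le C\norm{\Delta_h w_h}_{L^2(\Om)}$.

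\textbf{Second inequality.} By duality, $\norm{w_h}_{L^2(\Om)}=\sup_{\norm{\phi}_{L^2}\le 1}(w_h,\phi)_\Om$. For each admissible $\phi$, let $z\in H^1_0(\Om)\cap H^2(\Om)$ solve $-\Delta z=\phi$. Galerkin orthogonality yields
\[
  (w_h,\phi)_\Om=(\nabla w_h,\nabla z)_\Om=(\nabla w_h,\nabla R_h z)_\Om=(-\Delta_h w_h, R_h z)_\Om,
\]
so $(w_h,\phi)_\Om\le\norm{\Delta_h w_h}_{L^1(\Om)}\norm{R_h z}_{L^\infty(\Om)}$. A short calculation from the definitions of $R_h$, $\Delta_h$, and $P_h$ shows $-\Delta_h R_h z = P_h\phi$; applying the first inequality to $R_h z$ and using $L^2$-stability of $P_h$ then gives $\norm{R_h z}_{L^\infty(\Om)}\le C\norm{P_h\phi}_{L^2(\Om)}\le C\norm{\phi}_{L^2(\Om)}$. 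Taking the supremum over $\phi$ completes the argument.

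\textbf{Main obstacle.} The only nonelementary ingredient is the $L^\infty$-stability of the Ritz projection, $\norm{R_h z}_{L^\infty(\Om)}\le C\norm{z}_{L^\infty(\Om)}$, on a convex polyhedral domain with a quasi-uniform mesh. This is the classical Schatz--Wahlbin maximum-norm estimate rather than a direct inverse inequality, and it is precisely here that the quasi-uniformity of $\mathcal{T}$ enters. Once it is quoted, the rest amounts to standard elliptic regularity plus a one-line duality.
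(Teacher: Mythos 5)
Your construction of the auxiliary function $z$ with $R_hz=w_h$ and the use of elliptic regularity on the convex domain match the paper exactly, and your duality argument for the second inequality (including the identity $-\Delta_hR_hz=P_h\phi$) is correct as far as it goes. The problem is the final step of the first inequality: the $L^\infty$-stability of the Ritz projection that you quote is \emph{not} log-free for the lowest-order case $\nu=1$, which the paper admits ($V_h$ consists of Lagrange polynomials of order $\nu\ge1$). The Schatz--Wahlbin-type results (and the three-dimensional analogue the paper cites elsewhere) give $\norm{R_hz}_{L^\infty(\Om)}\le C\lh\,\norm{z}_{L^\infty(\Om)}$ for piecewise linears — indeed the paper itself inserts exactly this $\lh$ factor when it invokes the stability of $R_h$ in the proof of Lemma~\ref{lem:estimate}, and the logarithm is known to be unavoidable there. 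So your argument only yields $\norm{w_h}_{L^\infty(\Om)}\le C\lh\,\norm{\Delta_hw_h}_{L^2(\Om)}$, which is weaker than the stated log-free bound, and your second inequality inherits the same defect through its use of the first.

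The paper avoids this by a more elementary route that never touches the $L^\infty$-stability of $R_h$: it compares $w_h$ with the nodal interpolant $i_hw$ of the auxiliary function $w$, writes $\norm{w_h}_{L^\infty(\Om)}\le\norm{w_h-i_hw}_{L^\infty(\Om)}+\norm{i_hw-w}_{L^\infty(\Om)}+\norm{w}_{L^\infty(\Om)}$, controls the first term by an inverse estimate $h^{-N/2}$ times the $L^2$ errors $\norm{w_h-w}_{L^2(\Om)}\le Ch^2\norm{\nabla^2w}_{L^2(\Om)}$ and $\norm{w-i_hw}_{L^2(\Om)}$, and uses $H^2(\Om)\hookrightarrow L^\infty(\Om)$ for the last term; since $2-\frac N2\ge\frac12>0$ the constant is uniform and log-free. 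For the second inequality the paper likewise argues directly via $\norm{w_h}_{L^2(\Om)}\le\norm{w_h-w}_{L^2(\Om)}+\norm{w}_{L^2(\Om)}$, using the $L^1\to L^2$ bound $\norm{w}_{L^2(\Om)}\le C\norm{\Delta_hw_h}_{L^1(\Om)}$ (the dual of $L^2\to L^\infty$ for the solution operator) together with the inverse estimate $\norm{\Delta_hw_h}_{L^2(\Om)}\le Ch^{-N/2}\norm{\Delta_hw_h}_{L^1(\Om)}$. Your duality reduction of the second bound to the first is a genuinely different and clean alternative and would be perfectly fine once the first inequality is established without the logarithm — so the fix is simply to replace the quoted $L^\infty$-stability by the interpolant-plus-inverse-estimate argument.
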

\begin{proof}
  Let $w\in H^1_0(\Om)$ given as the solution of
  \[
    (\nabla w,\nabla \phi)_\Om=(-\Delta_hw_h,\phi)_\Om\quad\forall\phi\in H^1_0(\Om).
  \]
  Note, that by construction, it holds $R_hw=w_h$ for the Ritz projection $R_h$.  Elliptic
  regularity yields $w\in H^2(\Om)$ with $\norm{\nabla^2 w}_{L^2(\Om)}\le
  C\norm{\Delta_hw_h}_{L^2(\Om)}$. Further, it holds $\norm{w}_{L^2(\Om)}\le
  C\norm{\Delta_hw_h}_{L^1(\Om)}$ For the first assertion, let $i_h\colon C(\bar\Omega)\to V_h$ be
  the nodal interpolant. By standard estimates for $w_h-w$ and the interpolation error $w-i_hw$ as
  well as an inverse estimate, we get
  \[
    \begin{aligned}
      \norm{w_h}_{L^\infty(\Om)}
      &\le \norm{w_h-i_hw}_{L^\infty(\Om)}+\norm{i_hw-w}_{L^\infty(\Om)}+\norm{w}_{L^\infty(\Om)}\\
      &\le Ch^{-\frac N2}\bigl\{\norm{w_h-w}_{L^2(\Om)}+\norm{w-i_hw}_{L^2(\Om)}\bigr\} + \norm{i_hw-w}_{L^\infty(\Om)}+\norm{w}_{L^\infty(\Om)}\\
      &\le C(h^{2-\frac N2}+1)\norm{\nabla^2 w}_{L^2(\Om)}\le C\norm{\Delta_hw_h}_{L^2(\Om)}.
    \end{aligned}
  \]
  Similarly, we get for the second assertion that
  \[
    \begin{aligned}
      \norm{w_h}_{L^2(\Om)}
      &\le \norm{w_h-w}_{L^2(\Om)}+\norm{w}_{L^2(\Om)}\le
      C\bigl\{h^2\norm{\Delta_hw_h}_{L^2(\Om)}+\norm{\Delta_hw_h}_{L^1}\bigr\}\\
      &\le C(h^{2-\frac N2}+1)\norm{\Delta_hw_h}_{L^1(\Om)}\le C\norm{\Delta_hw_h}_{L^1(\Om)}.
    \end{aligned}
  \]
  This completes the proof.
\end{proof}

The next lemma provides a discrete maximal parabolic estimate for $v_{kh}$ with respect to the
$L^1(\IOm)$ norm.

\begin{lemma}\label{lemma:4}
  Let Assumption~\ref{ass:2} be fulfilled and $g \in L^1(\IOm)$. Then, for the solution
  $v_{kh}\in\Xkzh$ of~\eqref{eq:vkh} there holds
  \[
    \norm{\Delta_h  v_{kh}}_{L^1(\IOm)}+\sum_{m=1}^M \norm{[v_{kh}]_{m-1}}_{L^1(\Om)} \le C
    \left(\lk\right)^2 \bigl\{1+ \norm{b}_{L^\infty(\IOm)}^2\bigr\} \norm{g}_{L^1(\IOm)}
  \]
  with a constant $C$ independent of $h$, $k$, $g$, and $b$.
\end{lemma}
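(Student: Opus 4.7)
The plan is to combine an auxiliary $L^1(\IOm)$ bound for $v_{kh}$ itself, obtained by a duality argument against Lemma~\ref{lemma:2}, with the $L^1(\IOm)$ version of the discrete maximal parabolic regularity result from~\cite{LeykekhmanVexler:2017} applied after absorbing the $b v_{kh}$ term into the right-hand side. This parallels the proof of Lemma~\ref{lemma:2} but at the dual endpoint, which is the source of the additional logarithmic factor.

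To establish the auxiliary estimate
\[
  \norm{v_{kh}}_{L^1(\IOm)} \le C \lk \bigl\{1+\norm{b}_{L^\infty(\IOm)}\bigr\} \norm{g}_{L^1(\IOm)},
\]
I would fix an arbitrary $\psi \in L^\infty(\IOm)$ and introduce the discrete dual solution $z_{kh} \in \Xkzh$ defined by
\[
  B(\varphi_{kh}, z_{kh}) + (b\, \varphi_{kh}, z_{kh})_{\IOm} = (\psi, \varphi_{kh})_{\IOm} \quad \forall \varphi_{kh} \in \Xkzh.
\]
Reading off the associated backward time-stepping scheme via~\eqref{eq:B_dual} and reversing the time direction by setting $\tilde z_{kh,\tilde m} = z_{kh,M+1-\tilde m}$ (analogous to the transformation used in the proof of Lemma~\ref{lemma:1}) turns this into a forward instance of~\eqref{eq:vkh} on the reversed temporal mesh, with a coefficient of the same $L^\infty$ norm satisfying the same lower bound. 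Lemma~\ref{lemma:2} therefore yields
\[
  \norm{\Delta_h z_{kh}}_{L^\infty(I;L^2(\Om))} \le C \lk \bigl\{1 + \norm{b}_{L^\infty(\IOm)}\bigr\} \norm{\psi}_{L^\infty(I;L^2(\Om))}.
\]
Applying Lemma~\ref{lem:DeltaH} pointwise in $t$ and using the boundedness of $\Om$ to absorb $\norm{\psi}_{L^\infty(I;L^2(\Om))}$ into $\norm{\psi}_{L^\infty(\IOm)}$ gives $\norm{z_{kh}}_{L^\infty(\IOm)} \le C \lk \{1+\norm{b}_{L^\infty(\IOm)}\} \norm{\psi}_{L^\infty(\IOm)}$. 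Testing the dual equation with $\varphi_{kh} = v_{kh}$ and inserting~\eqref{eq:vkh} produces $(v_{kh}, \psi)_{\IOm} = (g, z_{kh})_{\IOm} \le \norm{g}_{L^1(\IOm)}\norm{z_{kh}}_{L^\infty(\IOm)}$, and taking the supremum over $\psi$ of unit $L^\infty(\IOm)$ norm yields the claim.

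Next, I would rewrite~\eqref{eq:vkh} as $B(v_{kh}, \varphi_{kh}) = (\tilde g, \varphi_{kh})_{\IOm}$ with $\tilde g := g - b v_{kh}$. The triangle inequality combined with the auxiliary bound gives
\[
  \norm{\tilde g}_{L^1(\IOm)} \le \norm{g}_{L^1(\IOm)} + \norm{b}_{L^\infty(\IOm)} \norm{v_{kh}}_{L^1(\IOm)} \le C \lk \bigl\{1 + \norm{b}_{L^\infty(\IOm)}^2\bigr\} \norm{g}_{L^1(\IOm)}.
\]
The $L^1(\IOm)$ version of the discrete maximal parabolic regularity estimate from~\cite{LeykekhmanVexler:2017} applied to this $b$-free equation delivers $\norm{\Delta_h v_{kh}}_{L^1(\IOm)} + \sum_{m=1}^M \norm{[v_{kh}]_{m-1}}_{L^1(\Om)} \le C \lk \norm{\tilde g}_{L^1(\IOm)}$, and multiplying the two estimates produces the claimed $(\lk)^2$ factor.

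The main obstacle I expect lies in the first step: cleanly identifying the backward-in-time discrete dual problem with a forward instance of~\eqref{eq:vkh} under time reversal so that Lemma~\ref{lemma:2} transfers with the same dependence on $\norm{b}_{L^\infty(\IOm)}$ and the same $\lk$ factor (Assumption~\ref{ass:2} is symmetric under the reversal, so this works), and then the passage from $\norm{\Delta_h z_{kh}}_{L^\infty(I;L^2(\Om))}$ to $\norm{z_{kh}}_{L^\infty(\IOm)}$ via Lemma~\ref{lem:DeltaH}. Once these are in place, the remainder is routine bookkeeping of logarithmic constants.
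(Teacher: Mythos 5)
Your proposal is correct and follows essentially the same route as the paper: an $L^1(\IOm)$ bound on $v_{kh}$ by duality against Lemma~\ref{lemma:2} combined with Lemma~\ref{lem:DeltaH} (the paper tests directly against $\sgn v_{kh}$ rather than taking a supremum over unit-norm $\psi\in L^\infty(\IOm)$, which is equivalent), followed by absorbing $b v_{kh}$ into the right-hand side $\tilde g = g - b v_{kh}$ and invoking the $L^1$ discrete maximal parabolic regularity of~\cite{LeykekhmanVexler:2017}. Your explicit attention to the time reversal needed to apply Lemma~\ref{lemma:2} to the backward dual problem is a detail the paper leaves implicit, but it does not change the argument.
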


\begin{proof}
  We consider the dual problem for $z_{kh} \in \Xkzh$ given by
  \[
    B(\varphi_k,z_{kh})  + (b \varphi_k,z_{kh})_{\IOm} =
    (\varphi_{kh},\sgn v_{kh})_{\IOm}\quad\forall\varphi_{kh}\in\Xkzh.
  \]
  Then, it holds
  \[
    \norm{v_{kh}}_{L^1(\IOm)}=B(v_{kh},z_{kh})+(b v_{kh},z_{kh})_{\IOm}=(g,z_{kh})_{\IOm}\le
    \norm{g}_{L^1(\IOm)}\norm{z_{kh}}_{L^\infty(\IOm)}.
  \]
  By Lemma~\ref{lemma:2} applied to the dual solution $z_{kh}$ and Lemma~\ref{lem:DeltaH} applied
  separately to $w_h=z_{kh,m}$ for $m=1,2,\dots,M$, we get
  \[
    \begin{aligned}
      \norm{z_{kh}}_{L^\infty(\IOm)}&\le C \norm{\Delta_h  z_{kh}}_{L^\infty(I;L^2(\Omega))}\le C\ \lk
      \bigl\{1 + \norm{b}_{L^\infty(\IOm)}\bigr\}\norm{\:\!\sgn v_k}_{L^\infty(I;L^2(\Om))}\\
      &\le C \lk \bigl\{1 +\norm{b}_{L^\infty(\IOm)}\bigr\}
    \end{aligned}
  \]
  and consequently
  \begin{equation}\label{eq:2}
    \norm{v_{kh}}_{L^1(\IOm)}\le C \lk \bigl\{1 + \norm{b}_{L^\infty(\IOm)}\bigr\} \norm{g}_{L^1(\IOm)}.
  \end{equation}
  As before, this implies for $\tilde g = g - b v_{kh}$ that
  \[
    \norm{\tilde g}_{L^1(\IOm)}\le C \lk \bigl\{1 +
    \norm{b}_{L^\infty(\IOm)}^2\bigr\}\norm{g}_{L^1(\IOm)},
  \]
  which yields the assertion again by means of~\cite[Theorem 2 and Corollary
  2]{LeykekhmanVexler:2017}.
\end{proof}

\section{Boundedness of the Discrete Solution}\label{sec:bound}

In this section, we derive the boundedness of the solution $u_{kh}$ to~\eqref{eq:discHeat} in
$L^\infty(\IOm)$. In the case $N=2$, this was already proven in~\cite{NeitzelVexler:2010} using a
different approach than used here. The technique employed there does not extend to the
three-dimensional situation, due to the used inverse inequality.

First, we introduce a modified nonlinearity $d_R$ with bounded derivative $\partial_ud_R$, To this
end, let for $R>0$ the nonlinearity $d_R$ be defined by
\[
  d_R(t,x,u) =  \begin{cases}
    d(t,x,R)+(u-R)\partial_u d(t,x,R),&\text{for }u>R,\\
    d(t,x,u),&\text{for }\abs{u}\le R,\\
    d(t,x,-R)+(u+R)\partial_u d(t,x,-R),&\text{for }u<-R.
  \end{cases}
\]
Further, let $u^R$ and $u_{kh}^R$ be the solutions of the continuous problem~\eqref{eq:heatEquation}
and the discrete problem~\eqref{eq:discHeat} with $d_R$ instead of $d$.
Assumption~\eqref{ass:b} on the local boundedness of $\partial_ud$ implies the global boundedness of
\[
  \partial_u d_R(t,x,u) =  \begin{cases}
    \partial_u d(t,x,R),&\text{for }u> R,\\
    \partial_u d(t,x,u),&\text{for }\abs{u}\le R,\\
    \partial_u d(t,x,-R),&\text{for }u< -R
  \end{cases}
\]
by a constant $C_R$ depending on $R$:
\begin{equation}\label{eq:boundDR}
  \abs{\partial_ud_R(t,x,u)}\le C_R \quad\text{for almost all }(t,x)\in\IOm\text{ and all }u\in\R.
\end{equation}
Additionally, by~\eqref{ass:c}, it holds
\begin{equation}\label{eq:gammaDR}
  \partial_ud_R(t,x,u)\ge-\gamma.
\end{equation}

In the following lemma, we state an quasi best approximation result the error between $u^R$ and
$u_{kh}^R$ with respect to the $L^\infty(\IOm)$ norm:

\begin{lemma}\label{lem:estimate}
  Let the Assumption~\ref{ass:fduz} and~\ref{ass:2} be fulfilled, $u^R$ be the solution
  of~\eqref{eq:heatEquation}, and $u_{kh}^R\in\Xkzh$ be the solution of~\eqref{eq:discHeat} each
  with $d_R$ instead of $d$. Then, it holds
  \[
    \norm{u^R-u^R_{kh}}_{L^\infty(\IOm)}\le
    C_R\lh\left(\lk\right)^2\norm{u^R-\chi_{kh}}_{L^\infty(\IOm)}
  \]
  for any $\chi_{kh}\in\Xkzh$.
\end{lemma}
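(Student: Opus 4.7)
The plan is to combine Galerkin orthogonality for the linearized error equation with a duality argument based on the discrete maximal regularity of Section~\ref{sec:aux}. Set $\eta:=u^R-\chi_{kh}$ and $\xi_{kh}:=\chi_{kh}-u^R_{kh}\in\Xkzh$; by the triangle inequality it suffices to bound $\norm{\xi_{kh}}_{L^\infty(\IOm)}$ by $\norm{\eta}_{L^\infty(\IOm)}$ with the claimed factors. Subtracting~\eqref{eq:discHeat} (with $d_R$ in place of $d$) from the weak identity satisfied by $u^R$ and applying the mean value theorem to $d_R$ yields the Galerkin-type orthogonality
\[
  B(\eta+\xi_{kh},\varphi_{kh})+(b(\eta+\xi_{kh}),\varphi_{kh})_{\IOm}=0\qquad\forall\varphi_{kh}\in\Xkzh,
\]
with the coefficient $b(t,x):=\int_0^1\partial_u d_R(t,x,u^R_{kh}+s(u^R-u^R_{kh}))\,ds$, which by~\eqref{eq:boundDR} and~\eqref{eq:gammaDR} fulfills $\norm{b}_{L^\infty(\IOm)}\le C_R$ and $b\ge-\gamma$; in particular it is an admissible coefficient in the sense of Section~\ref{sec:aux}.

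Next I perform the duality step. Let $(\bar t,\bar x)\in\IOm$ be a point where $|\xi_{kh}|$ attains its supremum, and let $\bar m$ be such that $\bar t\in I_{\bar m}$. Using the standard regularized discrete Dirac $\tilde\delta_{\bar x}\in V_h$ characterized by $\phi_h(\bar x)=(\phi_h,\tilde\delta_{\bar x})_\Om$ for every $\phi_h\in V_h$ together with $\norm{\tilde\delta_{\bar x}}_{L^1(\Om)}\le C$, I introduce the dual solution $z_{kh}\in\Xkzh$ determined by
\[
  B(\varphi_{kh},z_{kh})+(b\varphi_{kh},z_{kh})_{\IOm}=k_{\bar m}^{-1}(\varphi_{kh},\tilde\delta_{\bar x}\chi_{I_{\bar m}})_{\IOm}\qquad\forall\varphi_{kh}\in\Xkzh.
\]
Testing the dual equation with $\varphi_{kh}=\xi_{kh}$ and invoking the orthogonality above produces the representation
\[
  \xi_{kh}(\bar t,\bar x)=B(\xi_{kh},z_{kh})+(b\xi_{kh},z_{kh})_{\IOm}=-B(\eta,z_{kh})-(b\eta,z_{kh})_{\IOm}.
\]

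The estimate of the right-hand side proceeds by rewriting $B(\eta,z_{kh})$ via the dual expression~\eqref{eq:B_d}: the $\partial_t$-term drops because $z_{kh}$ is piecewise constant in time, and the gradient term is handled by the Ritz-projection identity $(\nabla\eta,\nabla z_{kh})_\Om=(R_h\eta,-\Delta_h z_{kh})_\Om$. Every $\eta$-factor is then pulled out in $L^\infty(\IOm)$-norm, using the $L^\infty$-stability of the Ritz projection on convex polyhedra for cG($1$) elements (which contributes the $\lh$ factor), while every $z_{kh}$-factor is controlled by Lemma~\ref{lemma:4} applied to the time-reversed dual problem (which shares the structure of~\eqref{eq:vkh}) with source of $L^1(\IOm)$-norm bounded by $\norm{\tilde\delta_{\bar x}}_{L^1(\Om)}\le C$; this altogether gives a factor $C(1+C_R^2)(\lk)^2$ which absorbs into $C_R$. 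The endpoint contribution $(\eta_M^-,z_{kh,M})_\Om$ is absorbed by setting $z_{kh,M+1}:=0$, so that $z_{kh,M}$ is precisely the initial jump of the backward problem and is therefore included in the jump sum of Lemma~\ref{lemma:4}.

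The principal technical obstacle will be the careful bookkeeping of the jump and endpoint contributions generated by~\eqref{eq:B_d} when $\eta\notin\Xkzh$, together with the verification that Lemma~\ref{lemma:4} transfers verbatim to the backward dual problem by time reversal (which is the case since~\eqref{eq:vkh} is symmetric under reversing time and relabeling $b$). Once these ingredients are in place, combining the bounds yields the desired estimate.
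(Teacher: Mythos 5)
Your proposal is correct and follows essentially the same route as the paper: the $\eta+\xi_{kh}$ splitting, linearization of $d_R$ into a coefficient $b$ with $\norm{b}_{L^\infty}\le C_R$ and $b\ge-\gamma$, a duality argument with a regularized space-time point source, the Ritz-projection identity with its $L^\infty$-stability supplying the $\lh$ factor, and Lemma~\ref{lemma:4} together with Lemma~\ref{lem:DeltaH} for the dual solution supplying $(\lk)^2$. The only (harmless) deviations are that you linearize the full difference $d_R(u^R)-d_R(u^R_{kh})$ with a single $b$ where the paper splits at $\chi_{kh}$ and bounds $d_R(u^R)-d_R(\chi_{kh})$ directly by $C_R\abs{\eta}$, and that you treat a general interval $I_{\bar m}$ where the paper writes out only $I_M$.
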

\begin{proof}
  Let $\chi_{kh}$ be an arbitrary but fixed element of $\Xkzh$.  We decompose the error $e =u^R -
  u_{kh}^R$ as
  \[
    e = (u^R - \chi_{kh}) + (\chi_{kh} - u_{kh}^R) = \eta + \xi_{kh},
  \]
  By Galerkin orthogonality, there holds
  \[
    B(e,\varphi_{kh}) + (d_R(\cdot,\cdot,u^R) - d_R(\cdot,\cdot,u_{kh}^R),\varphi_{kh})_{\IOm} = 0
    \qquad\forall \varphi_{kh}\in\Xkzh
  \]
  and therefore
  \begin{multline}\label{eq:Galerkin}
    B(\xi_{kh},\varphi_{kh}) + (d_R(\cdot,\cdot,\chi_{kh}) -
    d_R(\cdot,\cdot,u_{kh}^R),\varphi_{kh})_{\IOm}\\= -
    B(\eta,\varphi_{kh}) - (d_R(\cdot,\cdot,u^R) - d_R(\cdot,\cdot,\chi_{kh}),\varphi_{kh})_{\IOm}
  \end{multline}
  for all $\varphi_{kh}\in\Xkzh$.  To formulate an appropriate dual problem, we define the coefficient
  $b$ by
  \[
    b = \int_0^1 \partial_ud_R(\cdot,\cdot,u_{kh}^R + s (\chi_{kh} - u_{kh}^R))\, ds.
  \]
  By~\eqref{eq:boundDR}, it follows $\norm{b}_{L^\infty(I\times \Omega)} \le C_R$
  and~\eqref{eq:gammaDR} implies $b(t,x) \ge -\gamma$ for almost all $(t,x)\in I\times\Omega$.
  Further, by construction, it holds
  \[
    b\xi_{kh} = d_R(\cdot,\cdot,\chi_{kh}) - d_R(\cdot,\cdot,u_{kh}^R).
  \]

  We will estimate $\xi_{kh,M}(x_0)$ by using a duality argument. To this end, let
  $\tilde\delta_{x_0}\colon\Omega\to\R$ be a smoothed Dirac function with support contained in a
  single spatial cell $\bar\tau\ni x_0$ fulfilling
  \[
    \int_\tau \tilde\delta_{x_0}(x)\chi(x)\,dx=\chi(x_0)\quad\forall\chi\in
    \Ppol{1}(\tau)\quad\text{and}\quad\norm{\tilde\delta_{x_0}}_{L^1(\Om)}\le C.
  \]
  The explicit construction of such a function is given for instance in
  ~\cite[Appendix]{SchatzWahlbin:1995}.  Further, let $\theta_M\colon I\to\R$ be a smooth function
  with support contained in $I_M$ and fulfilling $\theta_M\ge 0$ as well as
  \[
    \int_{I_M}\theta_M(t)\,dt=1.
  \]
  Them, let $z_{kh} \in \Xkzh$ be given as solution of
  \[
    B(\varphi_{kh},z_{kh}) + (b \varphi_{kh},z_{kh})_{\IOm} =
    (\theta_M\tilde\delta_{x_0},\varphi_{kh})_{\IOm},\quad\forall\varphi_{kh}\in\Xkzh.
  \]
  Using~\eqref{eq:Galerkin}, we obtain
  \begin{equation}\label{eq:1}
    \begin{aligned}
      \xi_{kh,M}(x_0)&=(\theta_M\tilde\delta_{x_0},\xi_{kh})_{\IOm} =
      B(\xi_{kh},z_{kh})+(b\xi_{kh},z_{kh})_{\IOm}\\
      &=B(\xi_{kh},z_{kh})+(d_R(\cdot,\cdot,\chi_{kh}) - d_R(\cdot,\cdot,u_{kh}^R),z_{kh})_{\IOm}\\
      &=-B(\eta,z_{kh})-(d_R(\cdot,\cdot,u^R) - d_R(\cdot,\cdot,\chi_{kh}),z_{kh})_{\IOm}\\
      &=-(\nabla\eta,\nabla z_{kh})_{\IOm}+\sum_{m=1}^{M} (\eta_m,[z_{kh}]_m)_\Om-
      (d_R(\cdot,\cdot,u^R) - d_R(\cdot,\cdot,\chi_{kh}),z_{kh})_{\IOm},
    \end{aligned}
  \end{equation}
  where $\eta_m=u^R(t_m)-\chi_{kh,m}$.
  For the first term on the right-hand side of~\eqref{eq:1}, we get
  \[
    \begin{aligned}
      \abs{(\nabla\eta,\nabla z_{kh})_{\IOm}}\
      &= \abs{(\nabla R_h\eta,\nabla z_{kh})_{\IOm}}=\abs{(R_h\eta,\Delta_h z_{kh})_{\IOm}}\\
      &\le\norm{R_h\eta}_{L^\infty(\IOm)}\norm{\Delta_hz_{kh}}_{L^1(\IOm)}\\
      &\le C\lh \norm{\eta}_{L^\infty(\IOm)}\norm{\Delta_hz_{kh}}_{L^1(\IOm)},
    \end{aligned}
  \]
  where the stability of $R_h$ in $L^\infty(\Om)$ from~\cite{AHSchatz_1980a} for $N=2$ and
  from~\cite[Theorem 12]{LeykekhmanVexler:2016:II} for $N=3$  was used.  For the second term on the
  right-hand side of~\eqref{eq:1}, it follows
  \[
    \left\lvert \sum_{m=1}^{M} (\eta_m,[z_{kh}]_m)_\Om\right\rvert \le \sum_{m=1}^{M}
    \norm{\eta_m}_{L^\infty(\Om)}\norm{[z_{kh}]_m}_{L^1(\Om)}\le
    \norm{\eta}_{L^\infty(\IOm)}\sum_{m=1}^{M}\norm{[z_{kh}]_m}_{L^1(\Om)}.
  \]
  Finally, for the third term on the right-hand side of~\eqref{eq:1}, we obtain due
  to~\eqref{eq:boundDR} that
  \[
    \abs{(d_R(\cdot,\cdot,u^R) - d_R(\cdot,\cdot,\chi_{kh}),z_{kh})_{\IOm}}\le C_R
    \norm{\eta}_{L^\infty(\IOm)}\norm{z_{kh}}_{L^1(\IOm)}.
  \]
  Combining the previous estimates and applying Lemma~\ref{lemma:4} to the dual problem considered
  here as well as Lemma~\ref{lem:DeltaH} for $\norm{z_{kh}}_{L^1(\IOm)}$ leads to
  \[
    \begin{aligned}
      \xi_{kh,M}(x_0)
      &\le C_R\lh\norm{\eta}_{L^\infty(\IOm)}\left\{\norm{\Delta_hz_{kh}}_{L^1(\IOm)}
      +\sum_{m=1}^{M}\norm{[z_{kh}]_m}_{L^1(\Om)}+\norm{z_{kh}}_{L^1(\IOm)}\right\}\\
      &\le
      C_R\lh\left(\lk\right)^2\norm{\eta}_{L^\infty(\IOm)}\norm{\theta_M\tilde\delta_{x_0}}_{L^1(\IOm)}.
    \end{aligned}
  \]
  Using the bound
  \[
    \norm{\theta_M\tilde\delta_{x_0}}_{L^1(\IOm)}=\norm{\theta_M}_{L^1(I)}\norm{\tilde\delta_{x_0}}_{L^1(\Om)}\le
    C
  \]
  concludes the estimate of $\xi_{kh}$.
  Then, we get for the error
  \[
    \norm{e}_{L^\infty(\IOm)}\le
    \norm{\eta}_{L^\infty(\IOm)}+\norm{\xi_{kh}}_{L^\infty(\IOm)}\le
    C_R\lh\left(\lk\right)^2\norm{\eta}_{L^\infty(I;L^\infty(\Om))},
  \]
  which states the assertion.
\end{proof}

To formulate the boundedness result for $u_{kh}\in\Xkzh$, we require the following mild assumption on $k$ and $h$.

\begin{assumption}\label{ass:1}
  There exist $\sigma>0$ and a constant $C>0$ such that
  \[
    k\le Ch^\sigma.
  \]
\end{assumption}

\begin{theorem}\label{th:BoundUkh}
  Let the Assumptions~\ref{ass:fduz},~\ref{ass:2}, and~\ref{ass:1} be fulfilled.  Then, there exists
  $h_0>0$ and a constant $C > 0$ independent of $k$ and $h$ such that for all $h<h_0$ the solution
  $u_{kh}\in\Xkzh$ of~\eqref{eq:discHeat} fulfills
  \[
    \norm{u_{kh}}_{L^\infty(I\times\Omega)}\le \norm{u}_{L^\infty(\IOm)}+1.
  \]
\end{theorem}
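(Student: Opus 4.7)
The plan is to combine the quasi best approximation estimate of Lemma~\ref{lem:estimate} with the Hölder regularity of Theorem~\ref{th:regU}, and absorb the logarithmic factors $\lh$ and $\lk$ using the coupling of Assumption~\ref{ass:1}. The strategy is to pick the truncation level $R$ just slightly above $\norm{u}_{L^\infty(\IOm)}$, show by a small--error argument that the truncated discrete solution $u_{kh}^R$ also stays below this level, and then conclude by uniqueness that $u_{kh}^R=u_{kh}$.

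Concretely, I would set $R:=\norm{u}_{L^\infty(\IOm)}+1$. Because $\abs{u(t,x)}\le R$ pointwise, the nonlinearities $d$ and $d_R$ agree along $u$, hence the truncated continuous solution satisfies $u^R=u$. Lemma~\ref{lem:estimate} then gives, for every $\chi_{kh}\in\Xkzh$,
\[
  \norm{u-u_{kh}^R}_{L^\infty(\IOm)}\le C_R\lh\left(\lk\right)^2\norm{u-\chi_{kh}}_{L^\infty(\IOm)}.
\]
To control the right--hand side I would take $\chi_{kh}:=\Pi_k i_h u$, where $i_h$ is the nodal Lagrange interpolant. Theorem~\ref{th:regU} yields $u\in C^\beta(I;C^\kappa(\Om))$, so standard interpolation estimates in space together with $\norm{u-\Pi_ku}_{L^\infty(I;L^\infty(\Om))}\le Ck^\beta\norm{u}_{C^\beta(I;C^\kappa(\Om))}$ lead to
\[
  \norm{u-\chi_{kh}}_{L^\infty(\IOm)}\le C\bigl(k^\beta+h^\kappa\bigr)\bigl\{\norm{f}_{L^p(I;L^r(\Om))}+\norm{u_0}_{U_{p_0,r_0}(\Om)}\bigr\}.
\]

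Now I invoke Assumption~\ref{ass:1}: since $k\le Ch^\sigma$, both $k^\beta$ and $h^\kappa$ are bounded by $h^{\delta}$ for some $\delta>0$, while $\lh$ and $\lk$ are at worst polylogarithmic in $h$. Therefore the product $\lh(\lk)^2(k^\beta+h^\kappa)$ tends to $0$ as $h\to0$, and one can choose $h_0>0$ so small that for all $h<h_0$
\[
  \norm{u-u_{kh}^R}_{L^\infty(\IOm)}\le 1.
\]
This yields $\norm{u_{kh}^R}_{L^\infty(\IOm)}\le\norm{u}_{L^\infty(\IOm)}+1=R$. Since $\abs{u_{kh}^R}\le R$, the truncation is inactive, $d_R(\cdot,\cdot,u_{kh}^R)=d(\cdot,\cdot,u_{kh}^R)$, and $u_{kh}^R$ solves the original discrete problem~\eqref{eq:discHeat}. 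Uniqueness from Theorem~\ref{th:exist_ukh} gives $u_{kh}=u_{kh}^R$, which proves the claimed bound.

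The principal obstacle is to ensure that the logarithmic factors arising from the discrete maximal parabolic estimates (Lemma~\ref{lemma:4} via Lemma~\ref{lem:estimate}) do not spoil the argument; this is precisely the role of Assumption~\ref{ass:1}, which converts any positive Hölder rate in $h$ and $k$ into a genuine algebraic decay that beats the polylogarithmic blow--up. A minor technical point is that $C_R$ depends on $R=\norm{u}_{L^\infty(\IOm)}+1$, but $R$ is a fixed number determined by the continuous problem data via Proposition~\ref{prop:ContSol}, so $C_R$ is an absolute constant independent of $k$ and $h$.
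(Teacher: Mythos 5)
Your proposal is correct and follows essentially the same route as the paper: truncate at $R=\norm{u}_{L^\infty(\IOm)}+1$, apply Lemma~\ref{lem:estimate} with the H\"older regularity from Theorem~\ref{th:regU}, absorb the logarithms via Assumption~\ref{ass:1}, and conclude $u_{kh}=u_{kh}^R$ by uniqueness. The only (immaterial) difference is your choice $\chi_{kh}=\Pi_k i_h u$ instead of the paper's $\chi_{kh}=P_kP_hu$; both yield the needed $O(k^\beta+h^\kappa)$ bound in $L^\infty(\IOm)$.
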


\begin{proof}
  Let $R =  \norm{u}_{L^\infty(I\times \Omega)} +1$. By the boundedness of $u$, see
  Proposition~\ref{prop:ContSol}, we have $R<\infty$.  Due to this choice, it holds $u^R = u$.
  Using the estimate from Lemma~\ref{lem:estimate}, setting $\chi_{kh}=P_kP_hu$ and using the
  stability of the temporal $L^2$ projection $P_k$ in $L^\infty(\IOm)$, we get
  \[
    \begin{aligned}
      \norm{u &- u_{kh}^R}_{L^\infty(I\times \Omega)}\\
              &\le C_R\lh\left(\lk\right)^2
      \bigl\{\norm{u^R-P_ku^R}_{L^\infty(\IOm)}+\norm{P_k(u^R-P_hu^R)}_{L^\infty(\IOm)}\bigr\}\\
      &\le C_R\lh\left(\lk\right)^2
      \bigl\{\norm{u^R-P_ku^R}_{L^\infty(\IOm)}+\norm{u^R-P_hu^R}_{L^\infty(\IOm)}\bigr\}.
    \end{aligned}
  \]
  By standard estimates for $P_h$ and $P_k$ together with the regularity of $u$ from
  Theorem~\ref{th:regU}, it follows
  \[
    \begin{aligned}
      \norm{u - u_{kh}^R}_{L^\infty(I\times \Omega)}
      & \le
      C_R\lh\left(\lk\right)^2\bigl\{\norm{u-P_ku}_{L^\infty(\IOm)}+\norm{u-P_hu}_{L^\infty(\IOm)}\bigr\}\\
      &\le C_R \lh\left(\lk\right)^2 \bigl(k^\beta+h^\kappa\bigr) \norm{u}_{C^\beta(I;C^\kappa(\Om))}\\
      &\le C_R \lh\left(\lk\right)^2 \bigl(k^\beta+h^\kappa\bigr)
      \bigl\{\norm{f}_{L^p(I;L^r(\Om))}+\norm{u_0}_{U_{p_0,r_0}(\Om)}\bigr\}
    \end{aligned}
  \]
  Using Assumptions~\ref{ass:1}, it follows with $\delta=\min\{\sigma\beta,\kappa\}>0$
  \[
    \norm{u - u_{kh}^R}_{L^\infty(I\times \Omega)}\le C_R \lh^3 h^\delta.
  \]
  Consequently, there exists $h_0>0$, such that for all $h < h_0$ we have $\norm{u -
  u_{kh}^R}_{L^\infty(I\times \Omega)} \le 1$. This yields
  \[
    \norm{u_{kh}^R}_{L^\infty(I\times \Omega)} \le \norm{u}_{L^\infty(I\times \Omega)} +\norm{u -
    u_{kh}^R}_{L^\infty(I\times \Omega)}\le \norm{u}_{L^\infty(I\times \Omega)} +1 =R,
  \]
  and therefore $u_{kh} = u_{kh}^R$.  This gives the boundedness of $u_{kh}$. 
\end{proof}

\section{Error Estimates}\label{sec:estimates}

In this section, we provide (quasi) best approximation results and error estimates of the
discretization error between the continuous solution $u$ of~\eqref{eq:heatEquation} and the discrete
solution $u_{kh}$ of~\eqref{eq:discHeat} in various norms. Basis of all given estimates is the
boundedness of $u_{kh}$ given by Theorem~\ref{th:BoundUkh}.

We start with a best-approximation-type result in the $L^2(\IOm)$ norm.

\begin{theorem}\label{th:L2L2}
  Let the Assumption~\ref{ass:fduz},~\ref{ass:2} and~\ref{ass:1} be fulfilled. Further, let $u$ be
  the solution of~\eqref{eq:heatEquation}, and $u_{kh}\in\Xkzh$ be the solution
  of~\eqref{eq:discHeat} Then, it holds
  \[
    \norm{u-u_{kh}}_{L^2(\IOm)}\le C\bigl\{
    \norm{u-\chi_{kh}}_{L^2(\IOm)}+\norm{u-\Pi_ku}_{L^2(\IOm)}+\norm{u-R_hu}_{L^2(\IOm)}\bigr\}
  \]
  for any $\chi_{kh}\in\Xkzh$.
\end{theorem}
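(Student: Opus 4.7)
The plan is an Aubin--Nitsche duality argument carried out in the $L^2(\IOm)$ topology, exploiting the newly available uniform $L^\infty$ bound on $u_{kh}$ from Theorem~\ref{th:BoundUkh} to linearise the semilinear Galerkin orthogonality directly (no truncation $d_R$ is needed here). Split the error as $u-u_{kh}=\eta+\xi_{kh}$ with $\eta:=u-\chi_{kh}$ and $\xi_{kh}:=\chi_{kh}-u_{kh}\in\Xkzh$; by the triangle inequality it suffices to estimate $\norm{\xi_{kh}}_{L^2(\IOm)}$.

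\textbf{Linearisation.} Define
\[
 b(t,x):=\int_0^1 \partial_u d\bigl(t,x,\,u_{kh}(t,x)+s(u-u_{kh})(t,x)\bigr)\,ds,
\]
so that $d(\cdot,\cdot,u)-d(\cdot,\cdot,u_{kh})=b\,(u-u_{kh})$. Since $u\in L^\infty(\IOm)$ by Proposition~\ref{prop:ContSol} and $u_{kh}\in L^\infty(\IOm)$ by Theorem~\ref{th:BoundUkh}, the local bound~\eqref{ass:b} yields $b\in L^\infty(\IOm)$ with a constant depending only on the data, while~\eqref{ass:c} gives $b\ge -\gamma$. The Galerkin orthogonality then becomes the linear identity
\[
 B(\xi_{kh},\varphi_{kh})+(b\,\xi_{kh},\varphi_{kh})_{\IOm}
  = -B(\eta,\varphi_{kh})-(b\,\eta,\varphi_{kh})_{\IOm}\qquad \forall\varphi_{kh}\in\Xkzh,
\]
which places us in the framework of the auxiliary equation~\eqref{eq:vkh}.

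\textbf{Duality and termwise bounds.} Let $z_{kh}\in\Xkzh$ solve the discrete adjoint $B(\varphi_{kh},z_{kh})+(b\,\varphi_{kh},z_{kh})_{\IOm}=(\xi_{kh},\varphi_{kh})_{\IOm}$ for all $\varphi_{kh}\in\Xkzh$. Testing with $\xi_{kh}$ and substituting the orthogonality gives $\norm{\xi_{kh}}_{L^2(\IOm)}^2=-B(\eta,z_{kh})-(b\,\eta,z_{kh})_{\IOm}$. The zero-order term is immediate via Cauchy--Schwarz. For $B(\eta,z_{kh})$ I use the dual form~\eqref{eq:B_d}; since $z_{kh}$ is piecewise constant in time the $\partial_t z_{kh}$ contribution drops and
\[
 B(\eta,z_{kh})=(\nabla\eta,\nabla z_{kh})_{\IOm}-\sum_{m=1}^{M-1}(\eta(t_m),[z_{kh}]_m)_\Om+(\eta(T),z_{kh,M})_\Om.
\]
The gradient piece is rewritten via the Ritz identity $(\nabla\eta,\nabla z_{kh})=(\nabla(R_hu-\chi_{kh}),\nabla z_{kh})=-(R_hu-\chi_{kh},\Delta_h z_{kh})_{\IOm}$, which introduces $\norm{u-R_hu}_{L^2(\IOm)}+\norm{u-\chi_{kh}}_{L^2(\IOm)}$ paired with $\norm{\Delta_hz_{kh}}_{L^2(\IOm)}$ after triangle inequality and $L^2$-stability of $R_h$. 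For the jump and terminal pieces, since $\Pi_k u(t_m)=u(t_m)$ we have $\eta(t_m)=(\Pi_ku-\chi_{kh})\rvert_{I_m}$; weighted Cauchy--Schwarz then pairs $\norm{\Pi_ku-\chi_{kh}}_{L^2(\IOm)}\le \norm{u-\Pi_ku}_{L^2(\IOm)}+\norm{u-\chi_{kh}}_{L^2(\IOm)}$ with a weighted jump/terminal norm of $z_{kh}$.

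\textbf{Main obstacle.} The argument closes provided the discrete adjoint satisfies the $L^2(\IOm)\to L^2(\IOm)$ maximal parabolic regularity estimate
\[
 \norm{\Delta_h z_{kh}}_{L^2(\IOm)}+\Bigl(\sum_{m} k_m^{-1}\norm{[z_{kh}]_m}_{L^2(\Om)}^2\Bigr)^{1/2}+\norm{z_{kh,M}}_{L^2(\Om)}+\norm{z_{kh}}_{L^2(\IOm)}\le C\norm{\xi_{kh}}_{L^2(\IOm)},
\]
absorbing the bounded coefficient $b$. Section~\ref{sec:aux} only provides the $L^\infty(I;L^2)$ and $L^1(\IOm)$ variants (both with $\lk$ factors), so the missing point is to establish the $L^2$--$L^2$ case log-free; this should be accessible either by repeating the absorption trick of Lemmas~\ref{lemma:1}--\ref{lemma:2} applied to the $L^2$ case of~\cite[Theorem~2]{LeykekhmanVexler:2017}, or by a direct energy argument for the backward-in-time dG($0$) scheme. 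This is the main technical step. Once it is in place, substituting the above bounds into the duality identity and dividing by $\norm{\xi_{kh}}_{L^2(\IOm)}$ yields the claimed $L^2$ best-approximation-type estimate.
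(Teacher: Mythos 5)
Your argument is correct and follows essentially the same route as the paper: decomposition into $\eta+\xi_{kh}$, duality with the linearised lower-order coefficient $b$, the Ritz identity for the gradient term, the $\Pi_k$ identity with weighted Cauchy--Schwarz for the jump (and terminal) terms, and absorption of $b$ into the right-hand side of the dual problem (the paper works with the truncated $d_R$ and keeps the nonlinear difference $d_R(\cdot,\cdot,u)-d_R(\cdot,\cdot,\chi_{kh})$ on the right, but this is only a cosmetic difference from your direct linearisation between $u$ and $u_{kh}$). The ``main obstacle'' you flag is resolved exactly as you propose: the paper bounds $\norm{bz_{kh}}_{L^2(\IOm)}$ via Lemma~\ref{lemma:1} and then applies the known log-free $L^2$--$L^2$ discrete maximal parabolic regularity result of~\cite{MeidnerVexler:2008:I} (Corollary~4.2 there) to the rewritten dual problem $B(\varphi_{kh},z_{kh})=(\xi_{kh}-bz_{kh},\varphi_{kh})_{\IOm}$, so no new estimate needs to be established.
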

\begin{proof}
  Due to the boundedness of $u$ by Proposition~\ref{prop:ContSol} and the boundedness of $u_{kh}$ by
  Theorem~\ref{th:BoundUkh}, we have
  \[
    R_u=\norm{u}_{L^\infty(\IOm)}<\infty \quad\text{and}\quad
    R_{u_{kh}}=\sup_{k,h}\norm{u_{kh}}_{L^\infty(\IOm)}<\infty.
  \]
  Choosing $R=\max(R_u,R_{u_{kh}})$ in Lemma~\ref{lem:estimate}, we directly obtain $u=u^R$ and
  $u_{kh}=u_{kh}^R$. Proceeding as in the proof of Lemma~\ref{lem:estimate}, we decompose
  \[
    e=u-u_{kh}=(u-\chi_{kh})+(\chi_{kh}-u_{kh})=\eta+\xi_{kh}
  \]
  and introduce the following dual problem for $z_{kh}\in\Xkzh$:
  \[
    B(\varphi_{kh},z_{kh}) + (b \varphi_{kh},z_{kh})_{\IOm} =
    (\xi_{kh},\varphi_{kh})_{\IOm},\quad\forall\varphi_{kh}\in\Xkzh
  \]
  with $b$ as in the proof of Lemma~\ref{lem:estimate}. Testing with $\phi_{kh}=\xi_{kh}$ yields
  \begin{equation}\label{eq:7}
    \begin{aligned}
      \norm{\xi_{kh}}_{L^2(\IOm)}^2&=
      B(\xi_{kh},z_{kh})+(b\xi_{kh},z_{kh})_{\IOm}\\
      &=-(\nabla\eta,\nabla z_{kh})_{\IOm}+\sum_{m=1}^{M} (\eta_m,[z_{kh}]_m)_\Om-
      (d_R(\cdot,\cdot,u) - d_R(\cdot,\cdot,\chi_{kh}),z_{kh})_{\IOm}.\\
    \end{aligned}
  \end{equation}
  For the first term on the right-hand side of~\eqref{eq:7}, we get
  \[
    \abs{(\nabla\eta,\nabla z_{kh})_{\IOm}}\ = \abs{(R_h\eta,\Delta_h z_{kh})_{\IOm}}\\
    \le\norm{R_h\eta}_{L^2(\IOm)}\norm{\Delta_hz_{kh}}_{L^2(\IOm)}.
  \]
  For the second term on the right-hand side of~\eqref{eq:7}, it follows from the definition of
  $\Pi_k$ that
  \[
    \eta_m = u(t_m)-\chi_{kh,m} = u(t_m)-\chi_{kh}(t_m) = (\Pi_k u)(t_m) - \Pi_k (\chi_{kh})(t_m) =
    (\Pi_k\eta)_m
  \]
  and thus
  \[
    \begin{aligned}
      \left\lvert \sum_{m=1}^{M} (\eta_m,[z_{kh}]_m)_\Om\right\rvert
      &=\left\lvert \sum_{m=1}^{M} ((\Pi_k\eta)_m,[z_{kh}]_m)_\Om\right\rvert
      \le \sum_{m=1}^{M}\norm{(\Pi_k\eta)_m}_{L^2(\Om)}\norm{[z_{kh}]_m}_{L^2(\Om)}\\
      &\le\left(\sum_{m=1}^{M}k_m\norm{\Pi_k\eta}_{L^2(\Om)}^2\right)^{\frac12}
      \left(\sum_{m=1}^{M}k_m^{-1}\norm{[z_{kh}]_m}_{L^2(\Om)}^2\right)^{\frac12}\\
      &=\norm{\Pi_k\eta}_{L^2(\IOm)}\left(\sum_{m=1}^{M}k_m^{-1}\norm{[z_{kh}]_m}_{L^2(\Om)}^2\right)^{\frac12}.
    \end{aligned}
  \]
  Finally, for the third term on the right-hand side of~\eqref{eq:7}, we obtain due
  to~\eqref{eq:boundDR}
  \[
    \abs{(d_R(\cdot,\cdot,u) - d_R(\cdot,\cdot,\chi_{kh}),z_{kh})_{\IOm}}\le C_R
    \norm{\eta}_{L^2(\IOm)}\norm{z_{kh}}_{L^2(\IOm)}.
  \]
  It remains to bound the arising terms involving $z_{kh}$. By Lemma~\ref{lemma:1} applied to the
  dual problem for $z_{kh}$, we have $\norm{z_{kh}}_{L^\infty(I;L^2(\Om))}\le
  \norm{\xi_{kh}}_{L^1(I;L^2(\Om))}$ and consequently
  \[
    \norm{bz_{kh}}_{L^2(\IOm)}\le \norm{b}_{L^\infty(\IOm)}\norm{z_{kh}}_{L^2(\IOm)}\le
    \norm{b}_{L^\infty(\IOm)}\norm{\xi_{kh}}_{L^2(\IOm)}.
  \]
  Then,~\cite[Corollary~4.2]{MeidnerVexler:2008:I} applied to the rewritten dual problem for $z_{kh}$
  \[
    B(\varphi_{kh},z_{kh}) =  (\xi_{kh}-bz_{kh},\varphi_{kh})_{\IOm},\quad\forall\varphi_{kh}\in\Xkzh
  \]
  yields
  \[
    \begin{aligned}
      \norm{\Delta_hz_{kh}}_{L^2(\IOm)}+
      \left(\sum_{m=1}^{M}k_m^{-1}\norm{[z_{kh}]_m}_{L^2(\Om)}^2\right)^{\frac12}
      &\le\norm{\xi_{kh}-bz_{kh}}_{L^2(\IOm)}\\
      &\le \bigl\{1+\norm{b}_{L^\infty(\IOm)}\bigr\}\norm{\xi_{kh}}_{L^2(\IOm)}.
    \end{aligned}
  \]
  Using Lemma~\ref{lem:DeltaH} to bound $\norm{z_{kh}}_{L^2(\IOm)}$ by
  $\norm{\Delta_hz_{kh}}_{L^2(\IOm)}$ and the boundedness of $\norm{b}_{L^\infty(\IOm)}$ due
  to~\eqref{eq:boundDR}, we obtain
  \[
    \norm{\xi_{kh}}_{L^2(\IOm)}\le C
    \bigl\{\norm{\eta}_{L^2(\IOm)}+\norm{\Pi_k\eta}_{L^2(\IOm)}+\norm{R_h\eta}_{L^2(\IOm)}\bigr\}.
  \]
  Then, the triangle inequality implies the assertion.
\end{proof}

Under slightly strengthened assumptions on $f$ and $u_0$ Theorem~\ref{th:L2L2} yields an error
estimate in the $L^2(\IOm)$ norm of optimal order.

\begin{corollary}\label{cor:L2L2}
  Let the Assumption~\ref{ass:fduz},~\ref{ass:2} and~\ref{ass:1} be fulfilled and additionally
  $p,r\ge 2$ and $u_0\in H^1_0(\Om)$. Then, for the solution $u$ of~\eqref{eq:heatEquation}, it
  holds $u\in H^1(I;L^2(\Om))\cap L^2(I;H^2(\Om))$ with
  \[
    \norm{\pa_t u}_{L^2(\IOm)}+\norm{\nabla^2 u}_{L^2(\IOm)}\le
    C\bigl\{\norm{f}_{L^p(I;L^r(\Om))}+\norm{\nabla u_0}_{L^2(\Om)}+\norm{u_0}_{L^\infty(\Om)}\bigr\}.
  \]
  Further, for the error between $u$ and the solution $u_{kh}\in\Xkzh$ of~\eqref{eq:discHeat}, it
  holds
  \[
    \norm{u-u_{kh}}_{L^2(\IOm)}\le
    C(k+h^2)\bigl\{\norm{f}_{L^p(I;L^r(\Om))}+\norm{\nabla u_0}_{L^2(\Om)}+\norm{u_0}_{L^\infty(\Om)}\bigr\}.
  \]
\end{corollary}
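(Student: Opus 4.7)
The plan is to first establish the stated parabolic regularity of $u$ and then feed this regularity into Theorem~\ref{th:L2L2} together with standard projection error bounds. For the regularity, I would decompose $u = v + w$ exactly as in the proof of Theorem~\ref{th:regU}, with $v$ solving~\eqref{eq:linRhs} for the right-hand side $g = f - d(\cdot,\cdot,u)$ and $w$ solving~\eqref{eq:linU0}. Since $u \in L^\infty(\IOm)$ by Proposition~\ref{prop:ContSol}, since $d(\cdot,\cdot,0) = 0$, and since $\pa_u d$ is locally bounded by~\eqref{ass:b}, the composition $d(\cdot,\cdot,u)$ lies in $L^\infty(\IOm)$. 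Combined with the embedding $L^p(I;L^r(\Om)) \hookrightarrow L^2(\IOm)$, valid for $p,r \ge 2$ on the bounded set $\IOm$, this places $g$ in $L^2(\IOm)$. Applying the second assertion of Proposition~\ref{prop:ContHoelderRhs} with $\hat p = 2$ then yields $v \in W^{1,2}(I;L^2(\Om)) \cap L^2(I;H^2(\Om))$ with a bound controlled by $\norm{f}_{L^p(I;L^r(\Om))} + \norm{u_0}_{L^\infty(\Om)}$.

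For $w$ under the relaxed assumption $u_0 \in H^1_0(\Om)$, the hypothesis of the second assertion of Proposition~\ref{prop:ContHoelderU0} is not satisfied, so I would instead employ a direct energy argument: testing $\pa_t w - \Delta w = 0$ against $-\Delta w$ and integrating in time gives
\[
\tfrac12 \norm{\nabla w(T)}_{L^2(\Om)}^2 + \norm{\Delta w}_{L^2(\IOm)}^2 = \tfrac12 \norm{\nabla u_0}_{L^2(\Om)}^2.
\]
Elliptic $H^2$-regularity on the convex polygonal/polyhedral domain upgrades $\norm{\Delta w}_{L^2(\IOm)}$ to a bound on $\norm{\nabla^2 w}_{L^2(\IOm)}$, and the identity $\pa_t w = \Delta w$ transfers the same bound to $\pa_t w$ in $L^2(\IOm)$. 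Summing the contributions of $v$ and $w$ produces the first claimed regularity estimate.

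For the error estimate, I would apply Theorem~\ref{th:L2L2} with the choice $\chi_{kh} = P_k R_h u \in \Xkzh$. The triangle inequality together with the $L^2$-stability of $P_k$ yields
\[
\norm{u - P_k R_h u}_{L^2(\IOm)} \le \norm{u - P_k u}_{L^2(\IOm)} + \norm{u - R_h u}_{L^2(\IOm)},
\]
so that the right-hand side of Theorem~\ref{th:L2L2} reduces to the three standard projection errors $\norm{u - P_k u}_{L^2(\IOm)}$, $\norm{u - \Pi_k u}_{L^2(\IOm)}$, and $\norm{u - R_h u}_{L^2(\IOm)}$. Classical estimates bound the first two by $Ck\norm{\pa_t u}_{L^2(\IOm)}$ and the third by $Ch^2\norm{\nabla^2 u}_{L^2(\IOm)}$. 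Substituting the regularity bound from the first part then delivers the claimed $O(k + h^2)$ rate.

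The only non-routine ingredient is the regularity of $w$ under the relaxed assumption $u_0 \in H^1_0(\Om)$, which requires the direct energy argument above since the ready-made statement in Proposition~\ref{prop:ContHoelderU0} demands $u_0 \in H^2(\Om) \cap H^1_0(\Om)$; the remainder is a straightforward assembly of Theorem~\ref{th:L2L2} with classical projection error bounds.
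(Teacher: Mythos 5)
Your proposal is correct and follows essentially the same route as the paper: move $d(\cdot,\cdot,u)$ to the right-hand side, use the $L^\infty$ bound from Proposition~\ref{prop:ContSol} and~\eqref{ass:b} to place it in $L^2(\IOm)$, invoke linear parabolic $H^1(I;L^2)\cap L^2(I;H^2)$ regularity, and then combine Theorem~\ref{th:L2L2} with a projected interpolant and standard estimates for $P_k$, $\Pi_k$, $R_h$ (resp.\ $P_h$). The only differences are cosmetic: the paper obtains the regularity in one step by citing the improved-regularity theorem for the linear equation (Evans, Ch.~7) rather than via your $v+w$ splitting with an explicit energy argument for the homogeneous part, and it takes $\chi_{kh}=P_kP_hu$ rather than $P_kR_hu$; both choices work equally well.
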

\begin{proof}
  By putting the nonlinearity $d$ to the right-hand side as
  \[
    \pa_t u-\Delta u =f- d(\cdot,\cdot,u),
  \]
  regularity theory for the linear equation (cf., e.g.,~\cite[Chapter~7, Theorem~5]{Evans:2010}) yields as in the proof
  of Theorem~\ref{th:regU} by means of Proposition~\ref{prop:ContSol} that
  \[
    \begin{aligned}
      \norm{\pa_t u}_{L^2(\IOm)}+\norm{\nabla^2 u}_{L^2(\IOm)}
      &\le C\bigl\{\norm{f-d(\cdot,\cdot,u)}_{L^2(\IOm)} + \norm{\nabla u_0}_{L^2(\Om)}\bigr\}\\
      &\le C\bigl\{\norm{f}_{L^2(\IOm)}+\norm{u}_{L^\infty(\IOm)} + \norm{\nabla u_0}_{L^2(\Om)}\bigr\}\\
      &\le
      C\bigl\{\norm{f}_{L^p(I;L^r(\Om))}+\norm{\nabla u_0}_{L^2(\Om)}+\norm{u_0}_{L^\infty(\Om)}\bigr\},
    \end{aligned}
  \]
  since $p,r\ge 2$.

  From Theorem~\ref{th:L2L2}, we have
  \[
    \norm{u-u_{kh}}_{L^2(\IOm)}\le C\bigl\{
    \norm{u-\chi_{kh}}_{L^2(\IOm)}+\norm{u-\Pi_ku}_{L^2(\IOm)}+\norm{u-R_hu}_{L^2(\IOm)}\bigr\}.
  \]
  Choosing $\chi_{kh}=P_kP_h u$ as in the proof of Theorem~\ref{th:BoundUkh}, we get by the
  stability of $P_k$ in $L^2(\IOm)$
  \[
    \norm{u-\chi_{kh}}_{L^2(\IOm)}\le
    C\bigl\{\norm{u-P_ku}_{L^2(\IOm)}+\norm{u-P_hu}_{L^2(\IOm)}\bigr\}.
  \]
  Then, the standard estimates
  \[
    \begin{aligned}
      \norm{u-P_ku}_{L^2(\IOm)}+\norm{u-\Pi_ku}_{L^2(\IOm)}&\le Ck\norm{\pa_tu}_{L^2(\IOm)},\\
      \norm{u-P_hu}_{L^2(\IOm)}+\norm{u-R_hu}_{L^2(\IOm)}&\le Ch^2\norm{\nabla^2u}_{L^2(\IOm)}
    \end{aligned}
  \]
  yield the assertion.
\end{proof}

Next, we derive a best-approximation-type result in the $L^\infty(I;L^2(\Om))$ norm.

\begin{theorem}\label{th:L8L2}
  Let the Assumption~\ref{ass:fduz},~\ref{ass:2} and~\ref{ass:1} be fulfilled. Further, let $u$ be
  the solution of~\eqref{eq:heatEquation}, and $u_{kh}\in\Xkzh$ be the solution
  of~\eqref{eq:discHeat} Then, it holds for all $1\le \hat p\le\infty$
  \[
    \norm{u-u_{kh}}_{L^\infty(I;L^2(\Om))}\le
    C\lk\bigl\{\norm{u-\chi_{kh}}_{L^\infty(I;L^2(\Om))}+k^{-\frac1{\hat p}}\norm{u-R_hu}_{L^{\hat
    p}(I;L^2(\Om))}\bigr\}
  \]
\end{theorem}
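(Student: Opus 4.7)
The strategy is to extend the duality argument of Lemma~\ref{lem:estimate} and Theorem~\ref{th:L2L2} to the $L^\infty(I;L^2(\Om))$ norm, this time with the dual data concentrated in a single time slab. First I invoke the boundedness of $u$ from Proposition~\ref{prop:ContSol} and of $u_{kh}$ from Theorem~\ref{th:BoundUkh} to reduce to a bounded nonlinearity $d_R$ with global bound $C_R$ on $\partial_u d_R$, so that $u=u^R$ and $u_{kh}=u_{kh}^R$. Writing $e=\eta+\xi_{kh}$ with $\eta=u-\chi_{kh}$ and $\xi_{kh}=\chi_{kh}-u_{kh}\in\Xkzh$, I pick an index $m^*$ at which $\|\xi_{kh,m}\|_{L^2(\Om)}$ attains its maximum and a unit $\psi\in L^2(\Om)$ realizing this maximum by duality against $\xi_{kh,m^*}$; it then suffices to estimate $(\xi_{kh,m^*},\psi)_\Om$.

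To localize the pairing in time, I choose a smooth nonnegative $\theta_{m^*}\colon I\to\R$ with support in $I_{m^*}$ and $\int_{I_{m^*}}\theta_{m^*}\,dt=1$, so that $(\theta_{m^*}\psi,\xi_{kh})_{\IOm}=(\xi_{kh,m^*},\psi)_\Om$ by piecewise constancy of $\xi_{kh}$. Defining $b$ by the mean-value linearization used in the proof of Lemma~\ref{lem:estimate} (so that $b\xi_{kh}=d_R(\cdot,\cdot,\chi_{kh})-d_R(\cdot,\cdot,u_{kh})$, $\|b\|_{L^\infty(\IOm)}\le C_R$, and $b\ge-\gamma$), I let $z_{kh}\in\Xkzh$ solve the dual problem with right-hand side $\theta_{m^*}\psi$. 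Galerkin orthogonality as in~\eqref{eq:7} then yields
\[
(\xi_{kh,m^*},\psi)_\Om=-(\nabla\eta,\nabla z_{kh})_{\IOm}+\sum_{m=1}^{M}(\eta_m,[z_{kh}]_m)_\Om-(d_R(\cdot,\cdot,u)-d_R(\cdot,\cdot,\chi_{kh}),z_{kh})_{\IOm}.
\]

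The three terms are bounded using two different duality pairings. The Ritz identity $(\nabla\eta,\nabla z_{kh})_{\IOm}=(R_h\eta,-\Delta_hz_{kh})_{\IOm}$ together with the algebraic decomposition $R_h\eta=\eta-(u-R_hu)$ splits the gradient term into an $L^\infty(I;L^2(\Om))$--$L^1(I;L^2(\Om))$ pairing acting on $\eta$ and an $L^{\hat p}(I;L^2(\Om))$--$L^{\hat p'}(I;L^2(\Om))$ pairing acting on $u-R_hu$, where $\hat p'$ is the conjugate exponent. The jump term is bounded by $\|\eta\|_{L^\infty(I;L^2(\Om))}\sum_m\|[z_{kh}]_m\|_{L^2(\Om)}$ via $\|\eta_m\|_{L^2(\Om)}\le\|\eta\|_{L^\infty(I;L^2(\Om))}$, and the nonlinear term by $C_R\|\eta\|_{L^\infty(I;L^2(\Om))}\|z_{kh}\|_{L^1(I;L^2(\Om))}$ with $\|z_{kh}\|_{L^\infty(I;L^2(\Om))}$ controlled by Lemma~\ref{lemma:1} applied to $\theta_{m^*}\psi$. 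The decisive data scaling is $\|\theta_{m^*}\psi\|_{L^q(I;L^2(\Om))}\le Ck_{m^*}^{-1+1/q}$ for every $1\le q\le\infty$, and $k_{m^*}$ is comparable to $k$ by Assumption~\ref{ass:2}; this produces the factor $k^{-1/\hat p}$ in front of $\|u-R_hu\|_{L^{\hat p}(I;L^2(\Om))}$ while only a bounded factor multiplies $\|\eta\|_{L^\infty(I;L^2(\Om))}$.

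The main obstacle is establishing discrete maximal parabolic estimates for the dual solution $z_{kh}$ in $L^q(I;L^2(\Om))$ at the endpoints $q\in\{1,\hat p'\}$, namely $\|\Delta_hz_{kh}\|_{L^q(I;L^2(\Om))}$ and, for $q=1$, $\sum_m\|[z_{kh}]_m\|_{L^2(\Om)}$, each bounded by $C\lk(1+\|b\|_{L^\infty(\IOm)})\|\text{data}\|_{L^q(I;L^2(\Om))}$. Lemma~\ref{lemma:2} supplies the case $q=\infty$; the remaining cases follow by repeating its argument, absorbing $bz_{kh}$ via Lemma~\ref{lemma:1} and invoking the $L^q(I;L^2(\Om))$ versions of the discrete maximal parabolic regularity result of~\cite{LeykekhmanVexler:2017}, which carry a single logarithm at the endpoints $q=1,\infty$ and none for $1<q<\infty$. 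Collecting the three bounds, using the boundedness of $\|b\|_{L^\infty(\IOm)}\le C_R$, and applying the triangle inequality $\|u-u_{kh}\|_{L^\infty(I;L^2(\Om))}\le\|\eta\|_{L^\infty(I;L^2(\Om))}+\|\xi_{kh}\|_{L^\infty(I;L^2(\Om))}$ produces the claimed estimate with the single logarithm $\lk$ and the scaling $k^{-1/\hat p}$ in front of $\|u-R_hu\|_{L^{\hat p}(I;L^2(\Om))}$.
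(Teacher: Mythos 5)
Your architecture matches the paper's almost step for step: reduction to the truncated nonlinearity $d_R$ via Proposition~\ref{prop:ContSol} and Theorem~\ref{th:BoundUkh}, the splitting $e=\eta+\xi_{kh}$, a dual problem with the mean-value coefficient $b$ and data supported in a single time slab, the identity from Galerkin orthogonality, the decomposition $R_h\eta=\eta-(u-R_hu)$ in the gradient term, and the $L^1(I;L^2(\Om))$ discrete maximal regularity of the dual solution for the jump and lower-order terms. (Your use of a general maximizing index $m^*$ and a normalized $\psi$ is in fact tidier than the paper, which writes out only the case $m^*=M$ and leaves the passage to $\max_m$ implicit.) The one genuinely different step is where the factor $k^{-1/\hat p}$ comes from, and that step has a gap.

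You generate $k^{-1/\hat p}$ from the data scaling $\norm{\theta_{m^*}\psi}_{L^{\hat p'}(I;L^2(\Om))}\le Ck_{m^*}^{-1/\hat p}$ together with an $L^{\hat p'}(I;L^2(\Om))$ maximal parabolic regularity estimate for $z_{kh}$. Two objections. First, Assumption~\ref{ass:2} does not make $k_{m^*}$ comparable to $k=\max_mk_m$: it only gives comparability of \emph{neighbouring} steps and $k_{m^*}\ge k_{\min}\ge c_1k^{c_2}$, so your bound is $k_{m^*}^{-1/\hat p}\le Ck^{-c_2/\hat p}$, weaker than the asserted $k^{-1/\hat p}$ whenever $c_2>1$. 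Second, the constant in the $L^{\hat p'}(I;L^2(\Om))$ maximal regularity estimate is not uniform in $\hat p'$; it degenerates near the endpoints (like $\tfrac{(\hat p')^2}{\hat p'-1}$, cf.\ Proposition~\ref{prop:ContHoelderRhs} for the continuous analogue), and the theorem needs a constant independent of $\hat p$ over the whole range $1\le\hat p\le\infty$ --- this uniformity is precisely what Corollary~\ref{cor:L8L2} exploits by taking $\hat p=\lk$. Your route would introduce an uncontrolled factor of order $\hat p$ as $\hat p\to\infty$. The paper avoids both problems: it pairs $(u-R_hu,\Delta_hz_{kh})_{\IOm}$ as you do, but then applies a temporal inverse inequality to the piecewise constant function $\Delta_hz_{kh}$, namely $\norm{\Delta_hz_{kh}}_{L^{\hat p'}(I;L^2(\Om))}\le Ck^{-\frac1{\hat p}}\norm{\Delta_hz_{kh}}_{L^1(I;L^2(\Om))}$, so that only the single $L^1(I;L^2(\Om))$ endpoint estimate of~\cite{LeykekhmanVexler:2017:II} (with its one logarithm $\lk$) is ever invoked. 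Replacing your scaling argument by this inverse inequality repairs the proof; everything else you wrote goes through.
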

\begin{proof}
  Again, due to the boundedness of $u$ by Proposition~\ref{prop:ContSol} and the boundedness of $u_{kh}$ by
  Theorem~\ref{th:BoundUkh}, we have
  \[
    R_u=\norm{u}_{L^\infty(\IOm)}<\infty \quad\text{and}\quad
    R_{u_{kh}}=\sup_{k,h}\norm{u_{kh}}_{L^\infty(\IOm)}<\infty.
  \]
  Choosing $R=\max(R_u,R_{u_{kh}})$ in Lemma~\ref{lem:estimate}, we directly obtain $u^R=u$ and
  $u_{kh}^R=u_{kh}$. Proceeding as in the proof of Lemma~\ref{lem:estimate}, we decompose
  \[
    e=u-u_{kh}=(u-\chi_{kh})+(\chi_{kh}-u_{kh})=\eta+\xi_{kh}
  \]
  and introduce the following dual problem for $z_{kh}\in\Xkzh$:
  \[
    B(\varphi_{kh},z_{kh}) + (b \varphi_{kh},z_{kh})_{\IOm} =
    (\xi_{kh,M}\theta_M,\varphi_{kh})_{\IOm},\quad\forall\varphi_{kh}\in\Xkzh.
  \]
  with $b$ and $\theta_M$ as in the proof of Lemma~\ref{lem:estimate}. Testing with
  $\phi_{kh}=\xi_{kh}$ yields
  \begin{equation}\label{eq:6}
    \begin{aligned}
      \norm{\xi_{kh,M}}_{L^2(\Om)}^2&=
      B(\xi_{kh},z_{kh})+(b\xi_{kh},z_{kh})_{\IOm}\\
      &=-(\nabla\eta,\nabla z_{kh})_{\IOm}+\sum_{m=1}^{M} (\eta_m,[z_{kh}]_m)_\Om-
      (d_R(\cdot,\cdot,u) - d_R(\cdot,\cdot,\chi_{kh}),z_{kh})_{\IOm}.
    \end{aligned}
  \end{equation}
  For the first term on the right-hand side of~\eqref{eq:6}, we get by an inverse estimate for
  $\frac1{\hat p}+\frac1{\hat p'}=1$ that
  \[
    \begin{aligned}
      \abs{(\nabla\eta,\nabla z_{kh})_{\IOm}}\
      &= \abs{(R_h\eta,\Delta_h z_{kh})_{\IOm}}\le \abs{(u- R_h u,\Delta_h z_{kh})_{\IOm}} + \abs{(\eta,\Delta_h z_{kh})_{\IOm}}\\
      &\le\norm{u-R_hu}_{L^{\hat p}(I;L^2(\Om))}\norm{\Delta_hz_{kh}}_{L^{\hat p'}(I;L^2(\Om))}
      +\norm{\eta}_{L^\infty(I;L^2(\Om))}\norm{\Delta_hz_{kh}}_{L^1(I;L^2(\Om))}\\
      &\le C \bigl\{k^{-\frac1{\hat p}}\norm{u-R_hu}_{L^{\hat p}(I;L^2(\Om))}
    +\norm{\eta}_{L^\infty(I;L^2(\Om))}\bigr\}\norm{\Delta_hz_{kh}}_{L^1(I;L^2(\Om))}.
    \end{aligned}
  \]
  For the second term on the right-hand side of~\eqref{eq:6}, we obtain
  \[
    \left\lvert \sum_{m=1}^{M} (\eta_m,[z_{kh}]_m)_\Om\right\rvert \le \sum_{m=1}^{M}
    \norm{\eta_m}_{L^2(\Om)}\norm{[z_{kh}]_m}_{L^2(\Om)}\le
    \norm{\eta}_{L^\infty(I;L^2(\Om))}\sum_{m=1}^{M}\norm{[z_{kh}]_m}_{L^2(\Om)}.
  \]
  Finally, for the third term on the right-hand side of~\eqref{eq:6}, we obtain due
  to~\eqref{eq:boundDR} that
  \[
    \abs{(d_R(\cdot,\cdot,u) - d_R(\cdot,\cdot,\chi_{kh}),z_{kh})_{\IOm}}\le C_R
    \norm{\eta}_{L^\infty(I;L^2(\Om))}\norm{z_{kh}}_{L^1(I;L^2(\Om))}.
  \]
  It remains to bound the arising terms involving $z_{kh}$. By Lemma~\ref{lemma:1} applied to the
  dual problem for $z_{kh}$, we have $\norm{z_{kh}}_{L^\infty(I;L^2(\Om))}\le
  \norm{\xi_{kh,M}\theta_M}_{L^1(I;L^2(\Om))}$ and consequently
  \[
    \begin{aligned}
      \norm{bz_{kh}}_{L^1(I;L^2(\Om))}
      &\le \norm{b}_{L^\infty(\IOm)}\norm{z_{kh}}_{L^1(I;L^2(\Om))}\\
      &\le \norm{b}_{L^\infty(\IOm)}\norm{\xi_{kh,M}\theta_M}_{L^1(I;L^2(\Om)}
      =\norm{b}_{L^\infty(\IOm)}\norm{\xi_{kh,M}}_{L^2(\Om)}
    \end{aligned}
  \]
  due to the properties of $\theta_M$. By~\cite[Theorem~11]{LeykekhmanVexler:2017:II} applied to the
  rewritten dual problem for $z_{kh}$
  \[
    B(\varphi_{kh},z_{kh}) =  (\xi_{kh,M}\theta_M-bz_{kh},\varphi_{kh})_{\IOm},\quad\forall\varphi_{kh}\in\Xkzh
  \]
  yields
  \[
    \begin{aligned}
      \norm{\Delta_hz_{kh}}_{L^1(I;L^2(\Om))}+ \sum_{m=1}^{M}\norm{[z_{kh}]_m}_{L^2(\Om)}
      &\le C\lk \norm{\xi_{kh,M}\theta_M-bz_{kh}}_{L^1(I;L^2(\Om))}\\
      &\le C\lk \bigl\{1+\norm{b}_{L^\infty(\IOm)}\bigr\}\norm{\xi_{kh,M}}_{L^2(\Om)}.
    \end{aligned}
  \]
  Using Lemma~\ref{lem:DeltaH} for $\norm{z_{kh}}_{L^1(I;L^2(\Om))}$ and the boundedness of
  $\norm{b}_{L^\infty(\IOm)}$ due to~\eqref{ass:b}, we obtain
  \[
    \norm{\xi_{kh,M}}_{L^2(\Om)}\le C\lk
    \bigl\{\norm{\eta}_{L^\infty(I;L^2(\Om))}+Ck^{-\frac1{\hat p}}\norm{u-R_hu}_{L^{\hat
    p}(I;L^2(\Om))}\bigr\},
  \]
  which yields the assertion.
\end{proof}

Under further strengthened assumptions on $f$ and $u_d$, also this quasi best approximation result
implies an error estimate of optimal (up to logarithmic terms) order.

\begin{corollary}\label{cor:L8L2}
  Let the Assumption~\ref{ass:fduz},~\ref{ass:2} and~\ref{ass:1} be fulfilled and additionally $r\ge
  2$, $f\in L^\infty(I,L^r(\Om))$, and $u_0\in H^2(\Om)\cap H^1_0(\Om)$. Then, for the solution $u$
  of~\eqref{eq:heatEquation}, it holds $u\in W^{1,\hat p}(I;L^2(\Om))\cap L^{\hat p}(I;H^2(\Om))$
  for all $1< \hat p<\infty$ and there exists a constant $C_{\hat p}\le C\frac{\hat p^2}{\hat p-1}$
  with
  \[
    \norm{\pa_t u}_{L^{\hat p}(I;L^2(\Om))}+\norm{\nabla^2 u}_{L^{\hat p}(I;L^2(\Om))}\le
    C_{\hat p}\bigl\{\norm{f}_{L^\infty(I;L^r(\Om))}+\norm{\nabla^2 u_0}_{L^2(\Om)}\bigr\}.
  \]
  Further, for the error between $u$ and the solution $u_{kh}\in\Xkzh$ of~\eqref{eq:discHeat}, it
  holds
  \[
    \norm{u-u_{kh}}_{L^\infty(I;L^2(\Om))}\le
    C(k+h^2)\left(\lk\right)^2\bigl\{\norm{f}_{L^\infty(I;L^r(\Om))}+\norm{\nabla^2 u_0}_{L^2(\Om)}\bigr\}.
  \]
\end{corollary}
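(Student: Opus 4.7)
The plan is to prove the two assertions in turn: first the regularity of $u$, then the error bound by applying Theorem~\ref{th:L8L2}.

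\emph{Regularity.} I would decompose $u=v+w$ as in the proof of Theorem~\ref{th:regU}, with $v$ solving~\eqref{eq:linRhs} for right-hand side $g=f-d(\cdot,\cdot,u)$ and $w$ solving~\eqref{eq:linU0}. Proposition~\ref{prop:ContSol} gives $\norm{u}_{L^\infty(\IOm)}<\infty$, so by~\eqref{ass:b} one has $\norm{d(\cdot,\cdot,u)}_{L^{\hat p}(I;L^2(\Om))}\le C\norm{u}_{L^\infty(\IOm)}$; and since $r\ge 2$ and $\Om$ is bounded, also $\norm{f}_{L^{\hat p}(I;L^2(\Om))}\le C\norm{f}_{L^\infty(I;L^r(\Om))}$. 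The second statement of Proposition~\ref{prop:ContHoelderRhs} then places $v$ in $W^{1,\hat p}(I;L^2)\cap L^{\hat p}(I;H^2)$ with constant $C_{\hat p}\le C\hat p^2/(\hat p-1)$, and Proposition~\ref{prop:ContHoelderU0} gives $w\in W^{1,\infty}(I;L^2)\cap L^\infty(I;H^2)$ bounded by $C\norm{\nabla^2 u_0}_{L^2}$. Adding the bounds on $v$ and $w$ (and absorbing $\norm{u_0}_{L^\infty}$ via Lemma~\ref{lem:Xpr}) yields the first asserted regularity estimate.

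\emph{Error bound.} I would invoke Theorem~\ref{th:L8L2} with $\chi_{kh}=P_kR_hu=R_hP_ku$ and with the crucial choice $\hat p=\ln(T/k)=\lk$. Under Assumption~\ref{ass:2} (in particular $k_{\min}\ge c_1 k^{c_2}$) both $k^{-1/\hat p}$ and $k_{\min}^{-1/\hat p}$ are then bounded by absolute constants, while $C_{\hat p}=O(\lk)$. The second term in Theorem~\ref{th:L8L2} is handled by the standard Ritz estimate $\norm{u-R_hu}_{L^{\hat p}(I;L^2)}\le Ch^2\norm{\nabla^2 u}_{L^{\hat p}(I;L^2)}\le Ch^2\lk\bigl\{\norm{f}_{L^\infty(I;L^r)}+\norm{\nabla^2 u_0}_{L^2}\bigr\}$ using Part~1. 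For the first term, I decompose
\[
u-P_kR_hu=(I-P_k)u+(I-R_h)P_ku.
\]
Splitting $u=v+w$ again, the time-projection error produces $Ck\norm{\pa_tw}_{L^\infty(I;L^2)}$ on the $w$ piece and $Ck^{1-1/\hat p}\norm{\pa_tv}_{L^{\hat p}(I;L^2)}=O(k\lk)$ on the $v$ piece. The spatial error $(I-R_h)P_ku$ is bounded pointwise on each $I_m$ by $Ch^2\norm{\nabla^2\bar u_m}_{L^2}$, and Hölder's inequality gives $\norm{\nabla^2\bar u_m}_{L^2}\le k_{\min}^{-1/\hat p}\norm{\nabla^2 u}_{L^{\hat p}(I;L^2)}=O(\lk)$. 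Multiplying through by the outer prefactor $\lk$ from Theorem~\ref{th:L8L2} yields the asserted $(k+h^2)(\lk)^2$ rate.

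The main obstacle is the calibration of $\hat p$: the Calderón--Zygmund-type constant $C_{\hat p}\sim\hat p$ from Proposition~\ref{prop:ContHoelderRhs} grows in $\hat p$, whereas $k^{-1/\hat p}$, $k_{\min}^{-1/\hat p}$, and $k^{1-1/\hat p}/k$ decrease. The choice $\hat p=\lk$ (which is $\ge\ln 4>1$ by Assumption~\ref{ass:2}, so Proposition~\ref{prop:ContHoelderRhs} is applicable) balances these competing effects, and is precisely what produces the two logarithmic factors in the final estimate; each of the factors $k^{-1/\hat p}$, $k_{\min}^{-1/\hat p}$, and $C_{\hat p}$ must be tracked carefully through the decompositions to verify that no spurious negative powers of $k$ or $h$ survive.
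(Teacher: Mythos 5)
Your proposal is correct and follows essentially the same route as the paper: the same $u=v+w$ splitting for the regularity estimate via Propositions~\ref{prop:ContHoelderRhs} and~\ref{prop:ContHoelderU0}, and the same calibration $\hat p=\lk$ in Theorem~\ref{th:L8L2} to trade the growth of $C_{\hat p}$ against the decay of $k^{-1/\hat p}$. The only cosmetic differences are that the paper takes $\chi_{kh}=P_kP_hu$ rather than $P_kR_hu$ (handling the resulting piecewise-constant term by the same inverse-in-time estimate you apply to $(I-R_h)P_ku$) and bounds $u-P_ku$ directly by $Ck^{1-1/\hat p}\norm{\pa_tu}_{L^{\hat p}(I;L^2(\Om))}$ without re-splitting into $v$ and $w$.
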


\begin{proof}
  We put the nonlinearity $d$ to the right-hand side as
  \[
    \begin{aligned}
      \pa_t u-\Delta u &=f- d(\cdot,\cdot,u)&\quad&\text{in }I\times\Omega,\\
      u(0)&=u_0&&\text{on }\Omega,
    \end{aligned}
  \]
  and split the solution as $u=v+w$ where $v$ solves~\eqref{eq:linRhs} with $g=f-d(\cdot,\cdot,u)$
  and $w$ solves~\eqref{eq:linU0}. Then the Propositions~\ref{prop:ContHoelderRhs}
  and~\ref{prop:ContHoelderU0} imply
  \[
    \norm{\pa_t v}_{L^{\hat p}(I;L^2(\Om))}+\norm{\nabla^2 v}_{L^{\hat p}(I;L^2(\Om))}
    \le C_{\hat p}\norm{f-d(\cdot,\cdot,u)}_{L^{\hat p}(I;L^2(\Om))}.
  \]
  with $C_{\hat p}\le C\frac{\hat p^2}{\hat p-1}$ and
  \[
    \norm{\pa_t w}_{L^\infty(I;L^2(\Om))}+\norm{\nabla^2 w}_{L^\infty(I;L^2(\Om))}
    \le C\norm{\nabla^2 u_0}_{L^2(\Om)}.
  \]
  Combining these estimates and proceeding similarly to the proof of Theorem~\ref{th:regU} by means
  of Proposition~\ref{prop:ContSol} then implies
  \[
    \begin{aligned}
      \norm{\pa_t u}_{L^{\hat p}(I;L^2(\Om))}+\norm{\nabla^2 u}_{L^{\hat p}(I;L^2(\Om))}
      &\le C_{\hat p}\norm{f-d(\cdot,\cdot,u)}_{L^{\hat p}(I;L^2(\Om))} + C\norm{\nabla^2 u_0}_{L^2(\Om)}\\
      &\le C_{\hat p}\bigl\{\norm{f}_{L^\infty(I;L^2(\Om))}+\norm{u}_{L^\infty(\IOm)}\bigr\} + C\norm{\nabla^2 u_0}_{L^2(\Om)}\\
      &\le C_{\hat p}\bigl\{\norm{f}_{L^\infty(I;L^r(\Om))}+\norm{\nabla^2u_0}_{L^2(\Om)}\bigr\},
    \end{aligned}
  \]
  since $r\ge 2$ and $\hat p<\infty$.

  From Theorem~\ref{th:L8L2}, we have
  \[
    \norm{u-u_{kh}}_{L^\infty(I;L^2(\Om))}\le
    C\lk\bigl\{\norm{u-\chi_{kh}}_{L^\infty(I;L^2(\Om))}+k^{-\frac 1{\hat p}}\norm{u-R_hu}_{L^{\hat
    p}(I;L^2(\Om))}\bigr\}.
  \]
  Choosing $\chi_{kh}=P_kP_h u$ as in the proof of Theorem~\ref{th:BoundUkh}, we get
  \[
    \begin{aligned}
      \norm{u-\chi_{kh}}_{L^\infty(I;L^2(\Om))}
      &\le\norm{u-P_ku}_{L^\infty(I;L^2(\Om))}+\norm{P_k(u-P_hu)}_{L^\infty(I;L^2(\Om))}\\
      &\le\norm{u-P_ku}_{L^\infty(I;L^2(\Om))}+C k^{-\frac1{\hat p}}\norm{P_k(u-P_hu)}_{L^{\hat p}(I;L^2(\Om))}\\
      &\le\norm{u-P_ku}_{L^\infty(I;L^2(\Om))}+C k^{-\frac1{\hat p}}\norm{u-P_hu}_{L^{\hat p}(I;L^2(\Om))}.
    \end{aligned}
  \]
  From the stability of $P_k$ in $L^\infty(I;L^2(\Om)$ and standard interpolation estimates, we have
  \[
    \norm{u-P_ku}_{L^\infty(I;L^2(\Om))}\le Ck^{1-\frac1{\hat p}}\norm{\pa_tu}_{L^{\hat p}(I;L^2(\Om))}.
  \]
  Further, standard estimates for $\norm{u-P_hu}_{L^2(\Om)}$ and $\norm{u-R_hu}_{L^2(\Om)}$ imply
  \[
    \norm{u-P_hu}_{L^{\hat p}(I;L^2(\Om))}+\norm{u-R_hu}_{L^{\hat p}(I;L^2(\Om))}\le Ch^2\norm{\nabla^2u}_{L^{\hat p}(I;L^2(\Om))}.
  \]
  Using these estimates, we get
  \[
  \begin{aligned}
    \norm{u-u_{kh}}_{L^\infty(I;L^2(\Om))}&\le C \lk k^{-\frac1{\hat p}}\bigl\{ k
  \norm{\pa_tu}_{L^{\hat p}(I;L^2(\Om))}+h^2\norm{\nabla^2u}_{L^{\hat p}(I;L^2(\Om))}\bigr\}\\ &\le
    C_{\hat p} k^{-\frac1{\hat p}}  (k+h^2)\lk\bigl\{\norm{f}_{L^\infty(I;L^r(\Om))}+\norm{\nabla^2
    u_0}_{L^2(\Om)}\bigr\}.
  \end{aligned}
  \]
  Then, by setting $\hat p=\lk$ we have $C_{\hat p}k^{-\frac1{\hat p}}\le C\lk$, since $\frac Tk\ge
  4$ by assumption. This implies the assertion.
\end{proof}

Finally, in the following Theorem, a best approximation result in $L^\infty(\IOm)$ is stated. This
is a direct consequence of Theorem~\ref{th:BoundUkh}.

\begin{theorem}\label{th:L8L8}
  Let the Assumption~\ref{ass:fduz},~\ref{ass:2} and~\ref{ass:1} be fulfilled. Further, let $u$ be
  the solution of~\eqref{eq:heatEquation}, and $u_{kh}\in\Xkzh$ be the solution
  of~\eqref{eq:discHeat} Then, it holds
  \[
    \norm{u-u_{kh}}_{L^\infty(\IOm)}\le C\lh\left(\lk\right)^2\norm{u-\chi_{kh}}_{L^\infty(\IOm)}
  \]
  for any $\chi_{kh}\in\Xkzh$.
\end{theorem}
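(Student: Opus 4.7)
The plan is to reduce the theorem directly to Lemma~\ref{lem:estimate}, exploiting the boundedness result from Theorem~\ref{th:BoundUkh} to match the truncated problem with the original one. This is exactly the strategy used at the beginning of Theorems~\ref{th:L2L2} and~\ref{th:L8L2}, so the proof will be a short variant of those openings, without the additional duality argument that was needed for the weaker norms.

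First, I would set
\[
R_u=\norm{u}_{L^\infty(\IOm)}, \qquad R_{u_{kh}}=\sup_{k,h}\norm{u_{kh}}_{L^\infty(\IOm)},
\]
both of which are finite by Proposition~\ref{prop:ContSol} and Theorem~\ref{th:BoundUkh} respectively. Setting $R=\max(R_u,R_{u_{kh}})<\infty$, I would note that the truncation $d_R$ agrees with $d$ whenever $|u|\le R$ and $|u_{kh}|\le R$. By uniqueness of the continuous and of the discrete solution (Proposition~\ref{prop:ContSol} and Theorem~\ref{th:exist_ukh}), applied also with $d_R$ in place of $d$, this yields the identifications $u=u^R$ and $u_{kh}=u_{kh}^R$.

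Once these identifications are in place, the theorem follows immediately by invoking Lemma~\ref{lem:estimate} with this choice of $R$: for any $\chi_{kh}\in\Xkzh$,
\[
\norm{u-u_{kh}}_{L^\infty(\IOm)}=\norm{u^R-u^R_{kh}}_{L^\infty(\IOm)}\le C_R\lh\left(\lk\right)^2\norm{u-\chi_{kh}}_{L^\infty(\IOm)},
\]
which is the stated estimate (the constant $C_R$ depending only on $R$, which itself is fixed by the problem data).

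There is no real obstacle here: all the heavy lifting — the discrete maximal parabolic estimates, the duality against a smoothed Dirac functional, and the $L^\infty$-stability of the Ritz projection — has already been done inside Lemma~\ref{lem:estimate}, and the uniform $L^\infty$ bound on $u_{kh}$ needed to legitimately fix $R$ independently of $k,h$ is precisely Theorem~\ref{th:BoundUkh}. The only mild point to emphasize is that one must use Theorem~\ref{th:BoundUkh} to obtain a uniform (in $k,h$) value of $R$, for otherwise the constant $C_R$ in Lemma~\ref{lem:estimate} would not be uniform in the discretization parameters.
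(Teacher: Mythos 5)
Your proposal is correct and follows exactly the paper's own argument: fix $R=\max(R_u,R_{u_{kh}})$ using Proposition~\ref{prop:ContSol} and Theorem~\ref{th:BoundUkh}, identify $u=u^R$ and $u_{kh}=u_{kh}^R$, and apply Lemma~\ref{lem:estimate}. Your additional remark on why the identifications hold (uniqueness of the continuous and discrete solutions) is a welcome clarification of a step the paper leaves implicit.
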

\begin{proof}
  Due to the boundedness of $u$ by Proposition~\ref{prop:ContSol} and the boundedness of $u_{kh}$ by
  Theorem~\ref{th:BoundUkh}, we have
  \[
    R_u=\norm{u}_{L^\infty(\IOm)}<\infty \quad\text{and}\quad
    R_{u_{kh}}=\sup_{k,h}\norm{u_{kh}}_{L^\infty(\IOm)}<\infty.
  \]
  Choosing $R=\max(R_u,R_{u_{kh}})$ in Lemma~\ref{lem:estimate}, we directly obtain
  \[
    \norm{u-u_{kh}}_{L^\infty(\IOm)}=\norm{u^R-u^R_{kh}}_{L^\infty(\IOm)}\le
    C\lh\left(\lk\right)^2\norm{u-\chi_{kh}}_{L^\infty(\IOm)}.
  \]
  This concludes the short proof.
  \end{proof}

\bibliography{lit}
\bibliographystyle{abbrv}

\end{document}